\documentclass[]{amsart}
\usepackage{amssymb,amsmath} 
\usepackage[mathscr]{euscript}

\newcounter{sec}

\newcounter{punct}[sec]

\def\punct{\refstepcounter{punct}{\arabic{sec}.\arabic{punct}.  }}

\newtheorem{theorem}{Theorem}[sec]
\newtheorem{proposition}[theorem]{Proposition}

\newtheorem{lemma}[theorem]{Lemma}

\newtheorem{corollary}[theorem]{Corollary}
\newtheorem{observation}[theorem]{Observation}

\def\COUNTERS{\addtocounter{sec}{1}
              \setcounter{punct}{0}
          \setcounter{equation}{0}
          \setcounter{theorem}{0}
          }

\def\SL{\mathrm {SL}}
\def\SU{\mathrm {SU}}
\def\GL{\mathrm  {GL}}

\def\SO{\mathrm  {SO}}

\def\phi{\varphi}
\def\epsilon{\varepsilon}
\def\kappa{\varkappa}

\renewcommand\O{\mathrm{O}}

\def\le{\leqslant}
\def\ge{\geqslant}

\renewcommand{\Re}{\mathop{\rm Re}\nolimits}

\renewcommand{\Im}{\mathop{\rm Im}\nolimits}

\def\la{\langle}
\def\ra{\rangle}

\def\cB{\EuScript B}
\def\cC{\EuScript C}

\def\cH{\EuScript H}

\def\cL{\EuScript L}

\def\cR{\EuScript R}
\def\cS{\boldsymbol{\EuScript S}}

\def\cU{\EuScript U}

\def\cW{\EuScript W}

\def\frF{\mathfrak F}

\def\frf{\mathfrak f}

\def\frl{\mathfrak l}

\def\fro{\mathfrak o}

\def\frs{\mathfrak s}

\def\R {{\mathbb R }}
 \def\C {{\mathbb C }}
  \def\Z{{\mathbb Z}}

 \def\ov{\overline}

\def\wh{\widehat}

\def\sm{\smallskip}

\def\0{{\ov 0}}
\def\1{{\ov 1}}

\def\DeltA{\boldsymbol{\Delta}}

\begin{document}

\begin{center}
\bf \Large
Spherical harmonics,
\\
 operators of multiplication by coordinates,
 \\
  and infinitesimal conformal transformations

\bigskip

\sc \large

Yury A. Neretin%
\footnote{Supported by the grant FWF PAT5335224.}
\end{center}

\bigskip

{\small Consider the space of $C^\infty$-functions on the two-dimensional sphere $S^2$ and its decomposition $\oplus\mathcal H_n$ into a direct sum of minimal rotation-invariant spaces. 
 We consider elements of $\oplus\mathcal H_n$
 as functions of two variables, a nonnegative integer variable $n$
and a complex variable $u$ (a restriction of such function to the set $n=k$ is a polynomial in  $u$ of degree
$\le 2k$).  For operators of multiplication by  $x_1$, $x_2$, $x_3$ in $C^\infty(S^2)$ we obtain the corresponding operators in 
$\oplus\mathcal H_n$, they are differential-difference operators
in the variables $u$, $n$ (including second derivatives in $u$ and shifts $n\mapsto n\pm1$). We obtain the similar correspondence for operators of differentiation
along conformal vector fields on $S^2$. 
}

\section{Introduction}

\COUNTERS

{\bf \punct Spherical harmonics.}
We consider the sphere 
$$
x_1^2+x_2^2+x_3^2=1
$$
in $\R^3$ and the action of the  orthogonal
group $\O(3)$ in the space $C^\infty(S^2)$ of
smooth functions on the sphere.  
We get a direct sum of  
 minimal $\O(3)$-invariant subspaces
 (spherical harmonics) 
$$
\bigoplus_{n=0}^\infty \cH_n,\quad  
\text{where $\dim \cH_n={2n+1}$.}
$$

 It is a well-known (and nontrivial) topic of   theory of special functions and classical analysis in $\R^3$. It is also a topic
of representation theory, which at the first glance looks like a toy.

Theory of spherical harmonics was exposed in books by Hobson \cite{Hob};
MacRobert \cite{Mac};
Landau, Lifshitz \cite{LL};
Erd\'elyi, Magnus, Oberhettinger, Tricomi \cite{HTF2};
Gelfand, Minlos, Shapiro \cite{GMS}; 
Tychonov,  Samarski \cite{TS}; 
Vilenkin \cite{Vil};
Stein, Weiss \cite{SW}; 
Perelomov \cite{Per}; 
Helgason \cite{Hel}; 
Groemer \cite{Gro};
Terras \cite{Ter}; 
Andrews, Askey, Roy \cite{AAR};
Atkinson, Han \cite{AH}. 
These expositions overlap but do not cover one another.
We discuss a topic outside these works.

\sm

{\bf\punct The kernel.} 
We define the following function 
\begin{equation}
K(x_1,x_2,x_3; u, n):=
\biggl(\frac{\bigl(1-x_3+u(x_1-ix_2)\bigr)
\bigl(x_1+ix_2+u(x_3-1)\bigr)}{2(1-x_3)}\biggr)^n,
\label{eq:K-x}
\end{equation}
where $(x_1,x_2,x_3)$ ranges in the sphere $S^2$,
$n$ ranges in the space of nonnegative integers, and $u$ ranges in $\C$.

Passing to the spherical coordinates 
$$
x_1=\cos\theta\sin\psi,\quad x_2=\cos\theta \cos\psi,
\quad
x_3=\sin\theta
$$
where $\psi\in [0,2\pi]$, $\theta\in [-\pi/2,\pi/2]$,
we come to the expression
\begin{equation}
K(\psi,\theta;u,n)=
\Bigl(\frac 12\cdot \bigl(i e^{-i\psi}\cos\theta
+2u \sin \theta+ i u^2 e^{i\psi}\cos\theta
\bigr)
\Bigr)^n
\label{eq:K-phi}
\end{equation}

Below we prefer to regard $S^2$ as the Riemann sphere
and pass to the stereographic coordinates
$$
z=\frac{x_1+ix_2}{1-x_3}\,\in\C,
$$ 
the kernel in these coordinates has the form
$$
K(z, \ov z; u,n):=\Bigl(\frac{(1+\ov z u)(z-u}{1+z\ov z}\Bigr)^n
.$$

For each $n$ we consider the linear operator
$$
A_n f(u)=\int_{S^2} K(x; u,n) \,f(x)\,d\sigma(x)
$$
from the space of $C^\infty(S^2)$
to the space of polynomials in $u$ of degree
$\le 2n$. By $d\sigma(x)$ we denote the probabilistic
Lebesgue measure on the sphere. 

In the stereographic coordinates these operators have the form
$$
A_n f(u)=\frac 1\pi \int_\C K(z,\ov z;u,n)\, f(z,\ov z)\,\frac{d\Re z\,d\Im z}{(1+z\ov z)^2}.
$$
These operators are intertwining operators from $C^\infty(S^2)$
to $(2n+1)$-dimensional irreducible representations of $\SO(3)$
(see below Proposition \ref{pr:intertwiner}).
We consider all these functions together
regarding the collection $\{\frf_n\}:=\{A_n f\}$, where $n\ge 0$, as a function 
$\frF(u,n)$ depending
on a nonnegative integer $n$ and a complex variable $u$, 
$$
\frF(u,n):=\frf_n(u).
$$
Denote by $\cW^\infty$ the space of functions in $(u,n)$
obtained in this way. Its explicit description is given below
in Subsection \ref{ss:image-C-infty}.

\sm

{\sc Remark.} The intertwining operator $C^\infty(S^2)\to \cH_n$ is defined canonically up to a constant factor depending on $n$, and all ways to write 
this operator leads to the same
result, see for example, \cite{HTF2}, Sect. II.5.1.
\hfill $\boxtimes$

\sm

{\bf \punct The statement of the paper.} We work in the stereographic coordinates $z$, $\ov z$ on the sphere. 
It is easy to see that 
the transformation $f\mapsto \frF$ establishes the following
correspondence of  operators
in $C^\infty(S^2)$ and $\cW^\infty$:
\begin{align}
\frac{\partial}{\partial z}+\ov z^2 \frac{\partial}{\partial \ov z}
\quad&\longleftrightarrow\quad \frac{\partial}{\partial u}
;
\label{eq:triv1}
\\
z  \frac{\partial}{\partial z}- \ov z \frac{\partial}{\partial \ov z}
\quad&\longleftrightarrow\quad
u\frac{\partial}{\partial u}+n
;
\label{eq:triv2}
\\
z^2\frac{\partial}{\partial z}+ \frac{\partial}{\partial \ov z}
\quad&\longleftrightarrow\quad
u^2 \frac{\partial}{\partial u}+2n u
.
\label{eq:triv3}
\end{align} 
This is simply the intertwining property (the operator $A_n$ commutes with the Lie
algebra of the orthogonal group).

Denote by $T_+$ and $T_-$ the shift operators
in $\cW^\infty$:
$$
T_\pm\, \frF(u,n)=\frF(u, n\pm 1),\qquad T_-=T_+^{-1}.
$$ 

\medskip

First, consider the operators of multiplication by the following
functions:
$$
\frac{\ov z}{1+z \ov z},\qquad
\frac{1- z\ov z}{1+z \ov z},\qquad
\frac{z}{1+z \ov z}.
$$
In coordinates $(x_1,x_2,x_3)$ these operators are multiplications
by the functions 
$$
x_1-i x_2,\qquad x_3,\qquad x_1+i x_2.
$$

\begin{theorem}
\label{th:1}
We have the following correspondence of operators
in $C^\infty(S^2)$ and $\cW^\infty$:
\begin{align}
\frac{\ov z}{1+z \ov z}\quad&\longleftrightarrow\quad
-\frac1{2(n+1)(2n+1)}\, T_+\, \frac{\partial^2}{\partial u^2}
+\frac{n}{2(2n+1)}\,T_-
;
\label{eq:drob1}
\\
\frac{1- z\ov z}{1+z \ov z}\quad&\longleftrightarrow\quad
\frac1{(n+1)(2n+1)} \, T_+\, u \frac{\partial^2}{\partial u^2}
+\frac 1{n+1}\, T_+ \frac{\partial}{\partial u}
-\frac{n}{2n+1}\, T_- \,u
;
\label{eq:drob2}
\\
\frac{z}{1+z \ov z} \quad&\longleftrightarrow\quad
-\frac{1}{2(n+1)(2n+1)}\, T_+\, u^2 \frac{\partial^2}{\partial u^2}
+\frac{2}{(n+1)}\, T_+\, u \frac{\partial}{\partial u}-
\label{eq:drob3}
\\
\quad&\phantom{\longleftrightarrow}\quad
\qquad\qquad\qquad\qquad\qquad\qquad\qquad
-2T_+ 
- \frac n{2(2n+1)} \,T_- \, u^2
\notag
.
\end{align}
\end{theorem}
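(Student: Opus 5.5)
The approach is to prove each correspondence as an identity between kernels. Since $A_n f(u)=\int K(z,\ov z;u,n)f\,d\sigma$ and the variable $u$ does not enter the measure, it suffices, for a multiplier $\phi$, to find differential operators $D_\pm(n)$ in $u$ with
$$
\phi(z,\ov z)\,K(z,\ov z;u,n)=D_+(n)\,K(z,\ov z;u,n+1)+D_-(n)\,K(z,\ov z;u,n-1).
$$
Integrating against $f\,d\sigma$ then gives $A_n(\phi f)=D_+(n)A_{n+1}f+D_-(n)A_{n-1}f$. In the notation of $\cW^\infty$ this is exactly the operator $D_+T_++D_-T_-$ (coefficients evaluated at $n$, acting after the shift). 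For $n=0$ the $T_-$ term carries the factor $n$, so it vanishes and $K(\cdot,-1)$ is never needed.

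For (\ref{eq:drob1}) I would write $K(\cdot,n)=q^nR^{-n}$ with $R=1+z\ov z$ and
$$
q(u)=(1+\ov z u)(z-u)=-\ov z u^2+(z\ov z-1)u+z .
$$
The key observation is the discriminant identity $q'^2-2qq''=(z\ov z-1)^2+4z\ov z=R^2$, together with $q''=-2\ov z$. This gives $q'^2=R^2-4\ov z q$. Differentiating twice,
$$
\partial_u^2 K(\cdot,n+1)=R^{-n-1}\bigl[(n+1)n\,q^{n-1}q'^2+(n+1)q^nq''\bigr].
$$
Substituting $q'^2$ yields
$$
\partial_u^2K(\cdot,n+1)=n(n+1)K(\cdot,n-1)-2(n+1)(2n+1)\frac{\ov z}{R}K(\cdot,n).
$$
Solving for $\frac{\ov z}{R}K(\cdot,n)$ gives precisely (\ref{eq:drob1}).

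For (\ref{eq:drob2}) and (\ref{eq:drob3}) I would use commutators rather than new kernel computations. The vector fields in (\ref{eq:triv1})--(\ref{eq:triv3}) span $\mathfrak{so}(3)_\C$, and multiplications by $x_1-ix_2$, $x_3$, $x_1+ix_2$ form its adjoint representation. Concretely, I would apply $L_3=z^2\partial_z+\partial_{\ov z}$ to $\ov z/R$. Its commutator with the multiplication by $\ov z/R$ is the multiplication by $L_3(\ov z/R)$, which I expect to equal $\frac{1-z\ov z}{(1+z\ov z)^2}=\frac{1-z\ov z}{1+z\ov z}\cdot\frac1R$ up to normalisation. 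Because a weight argument is cleaner, it may be better to check directly that $L_3(\ov z/R)=\frac{1-z\ov z}{R^2}\cdot R^{0}$; if it is not exactly $x_3$, I would instead compute $L_3$ applied to $x_1-ix_2$ in Cartesian form, where it is a linear combination of coordinates, and fix the constant.

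On the $\cW^\infty$ side, I would then compute the commutator of $u^2\partial_u+2nu$ with the operator (\ref{eq:drob1}), remembering that $T_\pm$ shifts the $n$ in $2nu$. Repeating this once more produces (\ref{eq:drob3}). Alternatively, (\ref{eq:drob2}) and (\ref{eq:drob3}) can be verified by the same kernel method: $\frac{1-z\ov z}{R}$ and $\frac zR$ are expressible through $q,q',q''$ (for instance $q'(0)=z\ov z-1$), so one can seek identities of the form $\phi K_n=(\alpha u\partial_u^2+\beta\partial_u)K_{n+1}+\gamma uK_{n-1}$ and match coefficients.

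The main obstacle is bookkeeping in the commutator step: the $n$-dependent coefficients do not commute with $T_\pm$, and the normalising constants of the $\mathfrak{so}(3)$ action must come out right. In particular, the unusual constant terms $\frac2{n+1}T_+u\partial_u-2T_+$ in (\ref{eq:drob3}) should emerge from these commutators. If they resist, I would fall back on the direct kernel identity, using $q'^2=R^2-4\ov zq$ to reduce everything to powers of $q$ and $R$.
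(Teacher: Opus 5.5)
\textbf{Verdict: genuine gap in the part covering \eqref{eq:drob2} and \eqref{eq:drob3}, partly because those two formulas are misprinted.}

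\textbf{What is fine.} Your derivation of \eqref{eq:drob1} is correct and is the paper's argument in more compact form. The paper reaches the same identity
$\partial_u^2K_{n+1}=n(n+1)K_{n-1}-2(n+1)(2n+1)\frac{\ov z}{R}K_n$ (with $R=1+z\ov z$) by partial fractions in $\kappa^{-1}$. Your discriminant identity $q'^2-2qq''=R^2$ gets there in one line.

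\textbf{Where it fails.} For \eqref{eq:drob2} and \eqref{eq:drob3} you take the paper's route, commutators with $E_+=z^2\partial_z+\partial_{\ov z}$. But you only assert that it produces the printed formulas, and carried out, it does not. Three points:
\begin{enumerate}
\item $E_+(\ov z/R)=\frac{1-z\ov z}{1+z\ov z}$ exactly. Your guess $\frac{1-z\ov z}{R^2}$ is off by a factor $R$. Likewise $E_+\bigl(\frac{1-z\ov z}{1+z\ov z}\bigr)=-\frac{2z}{1+z\ov z}$.
\item The operator you plan to commute with, $u^2\partial_u+2nu$ from \eqref{eq:triv3}, has the wrong sign for this kernel. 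Since $E_+^t=-E_+$ and $-E_+K_n=(u^2\partial_u-2nu)K_n$, one has $A_n(E_+f)=(u^2\partial_u-2nu)A_nf$. Indeed $u^2\partial_u+2nu$ does not even preserve polynomials of degree $\le 2n$.
\item With $+2nu$, the commutator with \eqref{eq:drob1} is $\frac{3}{(n+1)(2n+1)}T_+u\partial_u^2+\frac{2n+3}{(n+1)(2n+1)}T_+\partial_u+\frac{n}{2n+1}T_-u$, which is not \eqref{eq:drob2}. With $-2nu$ you get
\[
\frac{1-z\ov z}{1+z\ov z}\longleftrightarrow\frac{1}{(n+1)(2n+1)}\,T_+u\partial_u^2-\frac{1}{n+1}\,T_+\partial_u-\frac{n}{2n+1}\,T_-u .
\]
The second commutator then gives the operator of $-2z/(1+z\ov z)$. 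After dividing by $-2$:
\[
\frac{z}{1+z\ov z}\longleftrightarrow\frac{1}{2(n+1)(2n+1)}\,T_+u^2\partial_u^2-\frac{1}{n+1}\,T_+u\partial_u+T_+-\frac{n}{2(2n+1)}\,T_-u^2 .
\]
The terms $\frac{2}{n+1}T_+u\partial_u-2T_+$ you expected to appear belong to the undivided commutator.
\end{enumerate}

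\textbf{The printed formulas are false already at $n=0$.} So no method can prove \eqref{eq:drob2} and \eqref{eq:drob3} as printed. Write $A_1f=c_0+c_1u+c_2u^2$ with
$c_0=\int\frac{z}{R}f\,d\sigma$, $c_1=\int\frac{z\ov z-1}{R}f\,d\sigma$, $c_2=-\int\frac{\ov z}{R}f\,d\sigma$.
\begin{itemize}
\item The printed right-hand side of \eqref{eq:drob2} gives $c_1+4c_2u$, instead of $A_0\bigl(\frac{1-z\ov z}{R}f\bigr)=-c_1$.
\item The printed right-hand side of \eqref{eq:drob3} gives $c_2u^2-2c_0$, instead of $c_0$.
\end{itemize}

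\textbf{How to fix it.} The fallback you mention, the direct kernel identity, is the cleanest route to the correct statements and needs no $\mathfrak{sl}_2$ bookkeeping. From $z\ov z-1=q'+2\ov zu$ and $z=q-uq'-\ov zu^2$ you get
$\frac{1-z\ov z}{R}K_n=-\frac1{n+1}\partial_uK_{n+1}-2u\frac{\ov z}{R}K_n$
and
$\frac{z}{R}K_n=K_{n+1}-\frac{u}{n+1}\partial_uK_{n+1}-u^2\frac{\ov z}{R}K_n$.
Substituting \eqref{eq:drob1} yields the two displayed formulas.

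As it stands, your argument proves \eqref{eq:drob1} and corrected versions of the other two. The claim that the commutator step reproduces \eqref{eq:drob2} and \eqref{eq:drob3} is false.
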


  
Next, we present operators in $\cW^\infty$ corresponding
to conformal vector fields on the sphere.    

\begin{theorem}
\label{th:2}
We have the following correspondences of operators
in $C^\infty(S^2)$ and $\cW^\infty$:
\begin{equation}
\frac{\partial}{\partial z}- \ov z^2\frac{\partial}{\partial \ov z}
\quad\longleftrightarrow\quad 
-\frac{n + 2}{(n+1)(2 n+1)}\,
T_+ \,\frac{\partial^2}{\partial u^2}  - 
 \frac{n (n -  1)}{(2 n+1)}\, T_-  ;
\label{eq:conf1}
\end{equation}
\begin{multline}
z\frac{\partial}{\partial z}+ \ov z\frac{\partial}{\partial \ov z}
\quad\longleftrightarrow\quad 
\\ - \frac{n+2}{(n+1) (2 n+1)}\,
 T_+\, u \frac{\partial^2}{\partial u^2}+ 
 \frac{n+2}{n+1}\, T_+ \,  \frac{\partial}{\partial u}
 -\frac{(n-1) n}{2 n+1}\, T_- \,u ;
\label{eq:conf2}
\end{multline}
\begin{multline}
z^2\frac{\partial}{\partial z}- \frac{\partial}{\partial \ov z}
\quad\longleftrightarrow\quad 
-\frac{(n+2) }{(n+1) (2 n+1)}\,\, T_+ \, u^2\frac{\partial^2}{\partial u^2}
+\\+
\frac{2 (n+2)}{n+1}\, T_+ \,u  \frac{\partial}{\partial u}
-
2 (n+2)\, T_+
-\frac{n-1}{2 n+1} \,T_-\, u^2.
\label{eq:conf3}
\end{multline}
\end{theorem}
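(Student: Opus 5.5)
Theorem \ref{th:2} reduces to Theorem \ref{th:1} together with the trivial correspondences \eqref{eq:triv1}--\eqref{eq:triv3}. The three conformal fields in \eqref{eq:conf1}--\eqref{eq:conf3} differ from the rotation fields in \eqref{eq:triv1}--\eqref{eq:triv3} only by the sign of the $\partial/\partial\ov z$-part. So it suffices to understand one non-isometric conformal field. The plan is to express that field through a commutator of a multiplication operator with the Laplace--Beltrami operator $\DeltA$, whose image in $\cW^\infty$ is the scalar $-n(n+1)$ (up to normalization), since $A_n$ intertwines and $\DeltA$ acts on $\cH_n$ by this scalar.

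The key identity is the following. For a function $h$ on the sphere,
$$
[\DeltA,h]f=(\DeltA h)f+2\la\nabla h,\nabla f\ra .
$$
For the first spherical harmonics $h\in\{x_1\pm ix_2,\,x_3\}$ we have $\DeltA h=-2h$. The gradient fields $\nabla x_j$ are exactly the conformal (non-isometric) vector fields of $S^2$. Concretely, in stereographic coordinates $\DeltA=(1+z\ov z)^2\partial_z\partial_{\ov z}$ (with a constant factor to be fixed). A direct computation with $h=\ov z/(1+z\ov z)$ should give a combination of $\partial_z-\ov z^2\partial_{\ov z}$ times a constant, plus a multiple of $h$. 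I will do this computation for all three functions $h$ and match them with the fields in \eqref{eq:conf1}, \eqref{eq:conf2}, \eqref{eq:conf3} respectively. This bookkeeping of constants is the step I expect to be most error-prone, since the precise normalizations matter.

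Next, transport to $\cW^\infty$. If $h\leftrightarrow P_+T_+ + P_-T_-$, where $P_\pm$ are the differential operators in $u$ from Theorem \ref{th:1}, then $\DeltA\leftrightarrow\lambda(n)=-cn(n+1)$. Hence
$$
[\DeltA,h]\leftrightarrow(\lambda(n+1)-\lambda(n))P_+T_+ +(\lambda(n-1)-\lambda(n))P_-T_-,
$$
with the coefficient functions of $n$ evaluated at the correct index according to the ordering of $T_\pm$ relative to the coefficients. Solving for the vector field, it equals $\tfrac12[\DeltA,h]+h$ up to the constants found above. This multiplies the $T_+$-part of Theorem \ref{th:1} by a factor of the form $-(n+1)+1$ or similar and the $T_-$-part by $n-1$ or similar. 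The resulting factors should be exactly $(n+2)$ on $T_+$ and $(n-1)$ on $T_-$, matching \eqref{eq:conf1}--\eqref{eq:conf3} (for example, $-\tfrac{1}{2(n+1)(2n+1)}\cdot 2(n+2)$ and $\tfrac{n}{2(2n+1)}\cdot(-2)(n-1)$ for \eqref{eq:conf1}). As a check, the pattern of the ratio $(n+2)$ versus $(n-1)$ on all terms of \eqref{eq:drob3} versus \eqref{eq:conf3} confirms the mechanism.

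The main obstacle is pinning down the convention of where the coefficient functions of $n$ sit relative to $T_\pm$ in Theorem \ref{th:1}, i.e.\ whether $\frac{n}{2n+1}T_-$ means multiply after or before the shift. The eigenvalue difference $\lambda(n\pm1)-\lambda(n)$ must be taken accordingly. I would fix this by testing on $f=1$, for which $\frF(u,0)=1$ and $\frF(u,n)=0$ for $n>0$. Applying both sides of \eqref{eq:drob1} and \eqref{eq:conf1} to this $f$ and comparing with direct integration determines the convention and checks the constants.
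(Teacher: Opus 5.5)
Your route is genuinely different from the paper's, and once the bookkeeping is fixed it works.

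The paper proceeds as follows. It transposes $F_-$ by integration by parts ($F_-^t=-F_-+\frac{4\ov z}{1+z\ov z}$), computes $F_-K$ by logarithmic differentiation, and rewrites the result through $T_-K$ and $\frac{\partial^2}{\partial u^2}T_+K$ using \eqref{eq:drob1}. It then gets $F_0$ and $F_+$ by commuting with $E_+$.

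You instead use the exact identities
\[
F_-=[\DeltA,\gamma]+2\gamma,\qquad -2F_0=[\DeltA,h_3]+2h_3,\qquad -F_+=[\DeltA,\delta]+2\delta,
\]
where $\gamma=\frac{\ov z}{1+z\ov z}$, $h_3=\frac{1-z\ov z}{1+z\ov z}$, $\delta=\frac{z}{1+z\ov z}$. You combine them with $\DeltA\leftrightarrow -n(n+1)$, which holds because $A_n$ kills $\cH_m$ for $m\ne n$.

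The correspondence is multiplicative: $D_j\leftrightarrow\mathcal D_j$ implies $D_1D_2\leftrightarrow\mathcal D_1\mathcal D_2$. Take $h\leftrightarrow P_+T_++P_-T_-$, with coefficients to the left of $T_\pm$ and evaluated at the output index (this is the paper's convention). Then $[\DeltA,h]\leftrightarrow 2(n+1)P_+T_+-2nP_-T_-$, and hence $[\DeltA,h]+2h\leftrightarrow 2(n+2)P_+T_+-2(n-1)P_-T_-$.

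So the factors are $\lambda(n)-\lambda(n\pm1)$, not $\lambda(n\pm1)-\lambda(n)$ as you wrote. This sign is forced by multiplicativity and has nothing to do with where the coefficients are placed. Your $f=1$ test would expose it: with your sign the formula predicts $A_1(F_-1)=2/3$, whereas $F_-1=0$.

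What your route buys:
\begin{itemize}
\item no transposition;
\item one uniform mechanism for all three fields;
\item an explanation of the multiplicative structure the paper merely observes, since $\la\nabla h,\nabla\,\cdot\,\ra+\lambda h\leftrightarrow(n+\lambda+2)P_+T_+-(n-\lambda-1)P_-T_-$ yields \eqref{eq:add1}--\eqref{eq:add3} at once.
\end{itemize}
The price is that you need all three formulas of Theorem~\ref{th:1}, while the paper needs only \eqref{eq:drob1} plus the $\mathfrak{sl}_2$-action.

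That dependence is where your write-up goes wrong. Your asserted check that \eqref{eq:drob3} and \eqref{eq:conf3} differ termwise by factors $(n+2)$ and $(n-1)$ is false for the printed formulas; the actual ratios are $2(n+2)$, $n+2$, $n+2$, $2(n-1)/n$. Likewise, \eqref{eq:drob2} against \eqref{eq:conf2} gives $-(n+2)$ on one $T_+$-term but $+(n+2)$ on the other.

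The cause is that the printed \eqref{eq:drob2} and \eqref{eq:drob3} are misprinted. Their $T_+$-parts must map $\cB_{2n+2}$ into $\cB_{2n}$, and in particular kill $u^{2n+2}$; as printed they do not. The correct versions are
\begin{gather*}
\tfrac{1}{(n+1)(2n+1)}T_+u\tfrac{\partial^2}{\partial u^2}-\tfrac{1}{n+1}T_+\tfrac{\partial}{\partial u}-\tfrac{n}{2n+1}T_-u,\\
\tfrac{1}{2(n+1)(2n+1)}T_+u^2\tfrac{\partial^2}{\partial u^2}-\tfrac{1}{n+1}T_+u\tfrac{\partial}{\partial u}+T_+-\tfrac{n}{2(2n+1)}T_-u^2 .
\end{gather*}
You can confirm them by testing with $f=h_3$, resp.\ $f=\gamma$, at $n=0$.

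Fed into your mechanism, these give \eqref{eq:conf1} and \eqref{eq:conf2} exactly. They give \eqref{eq:conf3} with last term $-\frac{n(n-1)}{2n+1}T_-u^2$. The printed $-\frac{n-1}{2n+1}T_-u^2$ is itself a misprint, and the same factor $n$ is missing in \eqref{eq:add3}. The test $f=\delta$ at $n=2$ confirms this: $A_1\delta=-u^2/6$ and $A_2(F_+\delta)=u^4/15$, which forces the coefficient $-2/5$, not $-1/5$.

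So derive the three multiplication formulas yourself, for example from the kernel identities for $T_-K$, $\frac{\partial}{\partial u}T_+K$ and $\frac{\partial^2}{\partial u^2}T_+K$, instead of citing them. Then actually carry out the ratio comparison rather than asserting it.
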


\medskip

Further, denote by $\cS$
the following set of rational functions in $z$, $\ov z$:
\begin{equation}
\Theta^+_{n,k}=\frac{z^k}{(1+z\ov z)^n},\quad \Theta^-_{n,k}:=\frac{\ov z^k}{(1+z\ov z)^n},
\quad\text{where $0\le k\le n$.}
\end{equation}
Denote by $\C[\cS]$ their linear span.

\sm

{\sc Remark.} In coordinates $x_1$, $x_2$, $x_3$, the space $\C[\cS]$
is the space of all polynomials in $x_1$, $x_2$, $x_3$,
see below Observation \ref{obs:3}.
\hfill $\boxtimes$

\begin{corollary}
\label{cor:}
{\rm a)} For any  $0\le k\le n$ and nonnegative integers
$p$, $q$, $r$, $p'$, $q'$, $r'$ 
consider the operator
\begin{equation}
\Theta_{n,k}^\pm \, \Bigl(\frac{\partial}{\partial z}\Bigr)^p\,
\Bigl(z\frac{\partial}{\partial z}\Bigr)^q \,
\Bigl(z^2\frac{\partial}{\partial z}\Bigr)^r\,
\Bigl(\frac{\partial}{\partial \ov z}\Bigr)^{p'}\,
\Bigl(\ov z\frac{\partial}{\partial \ov z}\Bigr)^{q'} \,
\Bigl(\ov z^2\frac{\partial}{\partial \ov z}\Bigr)^{r'}.
\label{eq:NEBASIS}
\end{equation}
Then the corresponding operator in $\cW^\infty$
is a finite sum
of the form
\begin{equation}
\sum_{M,L\ge 0, K\in \Z} \Xi_{M,L,K}(n) \, T_+^K\, u^M\, \frac{\partial^L}{\partial u^L},
\label{eq:diff-diff}
\end{equation}
where $\Xi_{M,L,K}(n)$ are rational functions in $n$ with poles at
integer and half-integer points.

\sm

{\rm b)} The linear span $\cU$ of operators \eqref{eq:NEBASIS}
is an algebra, i.e., it
is closed with respect to the product.
\end{corollary}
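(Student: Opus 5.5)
The plan is to reduce everything to generators. First I would show that every operator \eqref{eq:NEBASIS} lies in the algebra generated by six operators: the three multiplications of Theorem \ref{th:1} and the six vector fields of \eqref{eq:triv1}--\eqref{eq:triv3} and Theorem \ref{th:2}. The vector fields are easy to handle. Taking half-sums and half-differences of \eqref{eq:triv1} and \eqref{eq:conf1} recovers $\partial/\partial z$ and $\ov z^2\partial/\partial\ov z$. In the same way, \eqref{eq:triv2} with \eqref{eq:conf2} gives $z\partial_z$ and $\ov z\partial_{\ov z}$, and \eqref{eq:triv3} with \eqref{eq:conf3} gives $z^2\partial_z$ and $\partial_{\ov z}$. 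So all six basic fields correspond to operators of the form \eqref{eq:diff-diff}.

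For the multiplication factor $\Theta^\pm_{n,k}$ I would use Observation \ref{obs:3}, which says $\C[\cS]$ is the space of polynomials in $x_1,x_2,x_3$. Hence $\Theta^\pm_{n,k}$ is a polynomial in the three functions of Theorem \ref{th:1}. By Theorem \ref{th:1}, multiplication by it therefore corresponds to a noncommutative polynomial in the operators on the right-hand sides of \eqref{eq:drob1}--\eqref{eq:drob3}.

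It remains to check that operators of the form \eqref{eq:diff-diff} form an algebra and that the rationality condition is preserved under products. The key commutation rules are
$$
T_+^K\,\Xi(n)=\Xi(n+K)\,T_+^K,\qquad \frac{\partial^L}{\partial u^L}\,u^M=\sum_j \binom{L}{j}\frac{M!}{(M-j)!}\,u^{M-j}\frac{\partial^{L-j}}{\partial u^{L-j}}.
$$
Also, $T_\pm$ commutes with $u$ and $\partial_u$. Using these rules, any product of such operators can be brought to the normal ordering $\Xi(n)T_+^Ku^M\partial_u^L$, and the result is again a finite sum. The coefficients stay rational in $n$. Their poles stay at integers and half-integers, since the only denominators that appear are $n+1$ and $2n+1$ from the theorems, shifted by integers. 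This proves a).

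The main subtlety I expect is well-definedness. The correspondence is defined on $\cW^\infty$, that is, on functions of $(u,n)$ with $n\ge 0$ and $\deg_u\le 2n$. I need the identity "operator in $C^\infty(S^2)$ $\leftrightarrow$ operator in $\cW^\infty$" to be compatible with composition. This holds because $f\mapsto\frF$ is injective: spherical harmonics separate functions. Given injectivity, if $D_1\leftrightarrow E_1$ and $D_2\leftrightarrow E_2$, then $D_1D_2\leftrightarrow E_1E_2$. There is one more point to address. Poles at $n=-1$ or $n=-1/2$, and the $T_-$ terms at $n=0$, must be harmless on $\cW^\infty$. They are: the coefficient $n$ kills $T_-$ at $n=0$, and the denominators do not vanish for $n\ge0$. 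Poles introduced by shifting, such as $1/(n+1-K)$ after commuting past $T_+^K$, may occur at nonnegative integers. I would note that formula \eqref{eq:diff-diff} is an identity of rational expressions valid wherever it is evaluated on $\cW^\infty$. This is the step I would treat most carefully.

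For b), I would apply the Leibniz rule. Each vector field, applied to $\Theta^\pm_{n,k}$, gives an element of $\C[\cS]$. This follows by direct differentiation of $z^k(1+z\ov z)^{-n}$, or from the fact that the fields preserve polynomials on the sphere because they are conformal, which keeps degrees bounded. So I can move all coefficient functions to the left. Products of elements of $\C[\cS]$ are again in $\C[\cS]$, being polynomials in $x_j$. The remaining differential parts are words in the six fields. Each such word can be reordered into the normal form \eqref{eq:NEBASIS} using the commutation relations: $z$-fields commute with $\ov z$-fields, and within each variable the fields span $\frak{sl}_2$. Together with the spanning of $\C[\cS]$, this shows that $\cU$ is closed under products.
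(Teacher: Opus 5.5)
Your proposal is correct and is essentially the paper's own argument. For a), you reduce to the individual factors: the vector fields come from half-sums and half-differences of \eqref{eq:triv1}--\eqref{eq:triv3} with \eqref{eq:conf1}--\eqref{eq:conf3}, and $\Theta^\pm_{n,k}$ is written as a polynomial in the three functions of Theorem~\ref{th:1}; you then use that operators of the form \eqref{eq:diff-diff} are closed under composition. For b), you move coefficient functions to the left using invariance of $\C[\cS]$ under the six fields, its closure under products, and the $\mathfrak{sl}_2$ relations, exactly as in the paper.

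Your extra remarks on composing correspondences and on the coefficients at $n\ge 0$ go beyond what the paper discusses, but the statement as formulated does not need them. Composition in fact follows directly from the intertwining relations, without injectivity.
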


{\sc Remark.}
We have the relation
$$ \Bigl(z\frac{\partial}{\partial z}\Bigr)^2
=
 \Bigl(\frac{\partial}{\partial z}\Bigr) \,
 \Bigl(z^2\frac{\partial}{\partial z}\Bigr)
 -
 z\frac{\partial}{\partial z}
 $$
 and the similar relation for $\frac{\partial}{\partial \ov z}$.
 Therefore, the operators \eqref{eq:NEBASIS} are linearly dependent.
 Moreover, the space $\cU$ is spanned by expressions 
 \eqref{eq:NEBASIS} with $q$, $q'$ being 0 or 1.
 \hfill $\boxtimes$.

\sm

{\bf \punct The general framework.} The statements formulated above are a special case of `operational calculus' for Plancherel decompositions extending operational calculus for the usual Fourier transform on $\R^n$. The phenomenon was observed
in \cite{Ner1} by analysing nonstandard Plancherel formulas for index hypergeometric integral transform \cite{Ner0} and difference equations for
$_3F_2[1]$. A general conjecture was formulated in \cite{Ner1}, \cite{Ner-GL2}.
Several special cases were examined by Molchanov \cite{Mol1}--\cite{Mol3} and the author \cite{Ner1}--\cite{Ner-Lob}.

Consider the spherical principal series $T_\lambda$ of representations
of the
conformal group $\O(1,3)$ of the sphere (these representations
are realized in a space
of functions on $S^2$, see below Subsection \ref{ss:conformal-sphere}).
We examine restrictions of $T_\lambda$
 to the subgroup $\O(3)$ (this is simply the decomposition
 in spherical harmonics) and write generators
of the Lie algebra
$$
\frs\fro(1,3)_\C=\frs\frl(2,\C)\oplus \frs\frl(2,\C)
$$
 in  decomposition of the restriction. We get the trivial
correspondences \eqref{eq:triv1}--\eqref{eq:triv3}
and also the following correspondences  
 \begin{multline}
\frac{\partial}{\partial z} - \ov z^2 \frac{\partial}{\partial \ov z}
+\frac{2\lambda \ov z}{1+z \ov z} \quad \longleftrightarrow
\\
 -\frac{n +\lambda+ 2}{(n+1)(2 n+1)}\,
T_+ \,\frac{\partial^2}{\partial u^2}  - 
 \frac{n (n -  \lambda - 1)}{2 n+1}\, T_-;
 \label{eq:add1} 
 \end{multline}
 \begin{multline}
z\frac{\partial}{\partial z} + \ov z \frac{\partial}{\partial \ov z}
-\frac{\lambda (1-z \ov z)}{1+z \ov z} \quad \longleftrightarrow
\\ 
- \frac{n+\lambda+2}{(n+1) (2 n+1)}\,
 T_+\, u \frac{\partial^2}{\partial u^2}+ 
 \frac{n+\lambda+2}{n+1}\, T_+ \,  \frac{\partial}{\partial u}
 -\frac{n(n-\lambda-1) }{2 n+1}\, T_- \,u;
\label{eq:add2} 
 \end{multline}
\begin{multline}
z^2\frac{\partial}{\partial z} -  \frac{\partial}{\partial \ov z}
-\frac{2\lambda z}{1+z \ov z} \quad \longleftrightarrow
\\
-\frac{(n+\lambda+2) }{(n+1) (2 n+1)}\, T_+ \, u^2\frac{\partial^2}{\partial u^2}
+\frac{2 (n+\lambda+2)}{n+1}\, T_+ \,u  \frac{\partial}{\partial u}
-
\\-2 (n+\lambda+2)\, T_+
-\frac{n-\lambda-1}{2 n+1} \,T_-\, u^2.
\label{eq:add3}
\end{multline} 

Clearly, these formulas follow from \eqref{eq:drob1}-\eqref{eq:drob3},
\eqref{eq:conf1}-\eqref{eq:conf3} and vise versa. However, it is interesting
that factors depending on $n$ in formulas \eqref{eq:add1}-\eqref{eq:add3}
have a multiplicative structure. A similar picture take place in all  situations
\cite{Mol1}--\cite{Mol3}, \cite{Ner1}--\cite{Ner-Lob} examined earlier.

Our present case is the most simple in the known zoo, but now
 the topic is the most classical. 

\def\skryt 
{\begin{align*}
-\frac{n + 2 \mu}{(1 + n)(1 + 2 n)}\,
T_+ \,\frac{\partial^2}{\partial u^2}  - 
 \frac{n (n - 2 \mu + 1)}{(1 + 2 n)}\, T_-  
 \\
- \frac{2 \mu + n}{(1 + n) (1 + 2 n)}\,
 T_+\, u \frac{\partial^2}{\partial u^2}+ 
 \frac{2 \mu + n}{1 + n}\, T_+ \,  \frac{\partial}{\partial u}
 +\frac{(-1 + 2 \mu - n) n}{1 + 2 n}\, T_- \,u
\end{align*}
\begin{multline*}
-\frac{(2 \mu + n) }{(1 + n) (1 + 2 n)}\, T_+ \, u^2\frac{\partial^2}{\partial u^2}
+\frac{2 (2 \mu + n)}{1 + n}\, T_+ \,u  \frac{\partial}{\partial u}
-
\\-2 (2 \mu + n)\, T_+
+\frac{-1 + 2 \mu - n}{1 + 2 n} \,T_-\, u^2
\end{multline*}}
 
 \sm
 
 {\bf\punct The further structure of the paper.}
 Section 2 contains preliminaries on spherical harmonics.
 Proofs of Theorems \ref{th:1}--\ref{th:2} and Corollary \ref{cor:}
 are contained in Section 3.

\section{Spherical harmonics}

\COUNTERS

{\bf \punct The stereographic projection.} 
We consider the unit sphere $S^2\subset \R^3$
$$
x_1^2+x_2^2+x_3^2=1
$$
equipped with the probabilistic measure $d\sigma$ invariant
with respect to rotations. 
Consider the {\it stereographic projection} of $S^2$ from the north pole
 $(0,0,1)$
to the coordinate plane $x_1 O x_2$. We identify this plane and
$\C$ with the coordinate $z$.
We have
\begin{equation}
z(x)=\frac{x_1+i x_2}{1-x_3}.
\label{eq:z(x)}
\end{equation}
Conversely, 
\begin{equation}
(x_1,x_2,x_3)=\Bigl(\frac{z+\ov z}{1+z\ov z},\,
 \frac{(z-\ov z)}{i(1+z\ov z)},\,
-\frac{1-z\ov z}{1+z\ov z}\Bigr).
\label{eq:x(z)}
\end{equation}

Denote by 
$$
\wh d z:= d\Re z \, d\Im z
$$
the standard Lebesgue measure on $\C$. 
The measure $d\sigma$ on $S^2$ corresponds to the measure
\begin{equation}
\frac1\pi \, \frac{\wh d z}{(1+z \ov z)^{2}}
\label{eq:measure}
\end{equation}
on $\C$.

\sm

{\bf\punct Conformal group of sphere%
\footnote{For a simple self-closed introduction
 to the groups $\SO(3)$, $\SU(2)$,
 $\SL(2,\C)$,
see for example, \cite{KM}, Sections II.11-12, for an introduction to representations of
$\SU(2)$, see for example \cite{FH}, \S\S.10.4, 11.1, see also \cite{GMS}.}.%
\label{ss:conformal-sphere}}
 Consider the Minkowski space $\R^{1,3}$
 with coordinates $(x_0,x_1,x_2,x_3)$ and the indefinite scalar product
$$
\bigl\{ (x_0,x_1,x_2,x_3),(y_0,y_1,y_2,y_3)\bigr\}
=-x_0y_0+x_1y_1+x_2y_2+x_3y_3.
$$
The group of matrices preserving this form is the pseudoorthogonal 
group $\O(1,3)$. It consists of real $(1+3)$-block matrices 
$g=\begin{pmatrix}\alpha&\beta\\\gamma&\delta\end{pmatrix}$ satisfying the condition
$$
g\begin{pmatrix}-1&0\\0&1\end{pmatrix} g^t=\begin{pmatrix}-1&0\\0&1\end{pmatrix}
$$
(here $g^t$ denotes the transposed matrix). The orthogonal group
$\O(3)$ of $\R^3$ is the subgroup in $\O(1,3)$ consisting of matrices
$\begin{pmatrix}1&0\\0&\delta\end{pmatrix}$.
The group $\O(1,3)$, consists of 4  connected components,
 the component $\SO_0(1,3)$ containing the unit is
  determined by two additional conditions:
$$
\alpha>0, \qquad \det \delta>0.
$$

Consider the cone $\cC\subset \R^{1,3}$  
$$
-x_0^2+x_1^2+x_2^2+x_3^2=0
$$
 of isotropic vectors. We identify the section $x_0=1$ of $\cC$
with the sphere $S^2$. Points of this sphere are in one-to-one correspondence
with generatrices of the cone $\cC$.
The group $\O(1,3)$ acts on the set of generatrices and therefore
it acts on $S^2$. Namely, for $x=(x_1,x_2,x_3)\in S^2$
we have
\begin{multline*}
x\mapsto \begin{pmatrix}1&x\end{pmatrix}
\mapsto
 \begin{pmatrix}1&x\end{pmatrix} \begin{pmatrix}\alpha&\beta\\\gamma&\delta\end{pmatrix}
=\begin{pmatrix}\alpha+x\gamma,\beta+x\delta\end{pmatrix}\mapsto
\\
\mapsto 
\begin{pmatrix}1&(\alpha+x\gamma)^{-1}(\beta+x\delta)\end{pmatrix}\mapsto (\alpha+x\gamma)^{-1}(\beta+x\delta)\in S^2.
\end{multline*}
Here we take a point $x$ of $S^2$ and send it to the cone $\cC$;
applying $g$ we get another point   of $\cC$;
intersecting the corresponding generatrix with the hyperplane
$x_0=1$, we get a point of $S^2$.
 
These maps $S^2\to S^2$ are conformal, the coefficient
of dilatation of a transformation $g$ at a point $x$
is 
$$
k(g,x)=
(\alpha+x\gamma)^{-2}.
$$    
Clearly,  the coefficient satisfies the chain rule:
$$
k(g_1 g_2,x)=k(g_1,x)\, k(g_2,xg_1).
$$

{\sc Remark.}
For each $\lambda \in\C$ we  define a representation
of $\SO_0(1,3)$ in the space $C^\infty(S^2)$ by the formula
$$
T_\lambda\begin{pmatrix}
\alpha&\beta\\\gamma&\delta
\end{pmatrix} f(x)=
f\bigl((\alpha+x\gamma)^{-1}(\beta+x\delta)\bigr)\cdot (\alpha+x\gamma)^{-2\lambda}.
$$
By the chain rule,
$$
T_\lambda(g_1)\,T_\lambda(g_2)=T_\lambda(g_1g_2).
$$
The representations $T_\lambda$ are called {\it representations
of spherical principal series}.
If $\Re\lambda=1$, then the representations $T_\lambda$
are unitary in the Hilbert space $L^2(S^2)$.
\hfill $\boxtimes$

\sm

{\bf\punct The conformal group of the Riemann sphere and its Lie algebra.}
We consider the group $\SL(2,\C)$ consisting of complex 
matrices $\begin{pmatrix}a&b\\c&d \end{pmatrix}$
with determinant 1. 
By $\SU(2)$ we denote its subgroup consisting of unitary matrices,
elements of $\SU(2)$ have the form
$$
g=\begin{pmatrix}
a&b\\-\ov b&\ov a
\end{pmatrix}, \qquad\text{where $\det g=|a|^2+|b|^2=1$.}
$$

The group $\SL(2,\C)$ acts on the Riemann sphere 
$$\ov\C:=\C\cup \infty$$
 by conformal transformations
$$
g:\, z\mapsto z^{[g]}:=\frac{b+dz}{a+cz}.
$$
The transformation corresponding to the  matrix 
$\begin{pmatrix}-1&0\\0&-1\end{pmatrix}$
is trivial, so actually we have an action 
of the quotient group $\SL(2,\C)/\Z_2$.

We have
\begin{equation}
\wh d z^{[g]}=\frac{\wh d z}{(a+cz)^2\, \ov{(a+zc)}^2}.
\label{eq:transformation-dz}
\end{equation}

For $h\in \SU(2)$ 
we have
\begin{align}
1+\ov {z^{[h]}} u^{[h]}=\frac{1+\ov z u}{(\ov a- b \ov z)(a-\ov b u)},
\label{eq:1+zu}
\\
z^{[h]}-u^{[h]}=\frac{z-u}{(a-\ov b z)(a-\ov b u)}
\label{eq:z-u}
\end{align}

In particular, this implies that for  $h\in \SU(2)$
we have
\begin{equation}
\frac{\wh d z^{[h]}}{(1+ z^{[h]}\ov{z^{[h]}})^2}=
 \frac{\wh d z}{(1+z\ov z )^2},
\label{eq:whdzh}
\end{equation}
i.e., the transformations $z\mapsto z^{[h]}$ 
with $h\in \SU(2)$ preserve the measure \eqref{eq:measure}.

The stereographic projection $S^2\to\C$ identifies
conformal groups of $\ov\C$ and $S^2$, we have the surjective
homomorphism
$$
\SL(2,\C)\to \SO_0(1,3)
$$
sending
$\SU(2)\to \SO(3)$. The kernel in both cases consists of matrices 
$\begin{pmatrix}\pm 1&0\\0&\pm 1 \end{pmatrix}$.
The group $\SU(2)$ is a two-sheeted covering 
of  $\SO(3)$ and $\SL(2,\C)$ is a two-sheeted covering of  $\SO_0(1,3)$.

\sm 

For instance, we  have the following correspondences of 
one-parametric groups in $\SO(3)$ and in $\SU(2)$:
\begin{align*}
\begin{pmatrix}1&0&0\\
0&\cos 2\phi&\sin2\phi\\
0&-\sin 2\phi&\cos 2\phi
\end{pmatrix}&\longrightarrow
\begin{pmatrix}
\cos\phi& i \sin\phi
\\
i \sin\phi&\cos\phi
\end{pmatrix};
\\
\begin{pmatrix}
\cos2\phi&0&\sin 2\phi\\
0&1&0\\
-\sin 2\phi&0&\cos 2\phi
\end{pmatrix}&
\longrightarrow
\begin{pmatrix}
\cos \phi&\sin\phi\\
-\sin\phi&\cos\phi
\end{pmatrix};
\\
\begin{pmatrix}
\cos 2\phi&\sin 2\phi&0\\
-\sin 2\phi&\cos 2\phi&0\\
0&0&1
\end{pmatrix}
&
\longrightarrow
\begin{pmatrix}
e^{i\phi}&0\\
0&e^{-i\phi}
\end{pmatrix}.
\end{align*}
The corresponding vector fields on $\C$ are
respectively
\begin{gather*}
Q_1=i\Bigl((1-z^2)\frac{\partial}{\partial z}-
(1-\ov z^2)\frac{\partial}{\partial \ov z}\Bigr)
,\qquad
Q_2=(1+z^2)\frac{\partial}{\partial z}+
 (1+\ov z^2)\frac{\partial}{\partial \ov z},\\
Q_3= 2i\Bigl(-z \frac{\partial}{\partial z}+
 \ov z \frac{\partial}{\partial\ov z}\Bigr).
\end{gather*}
These vector fields form a basis in the Lie algebra $\mathfrak{su}(2)$
of the group $\SU(2)$. We prefer to work with the complexification
$\mathfrak{su}(2)_\C\simeq \mathfrak{sl}(2,\C)$ and choose the following basis
in $\mathfrak{su}(2)_\C$:
$$
E_-:=\frac12(Q_2-i Q_1),\qquad E_0:=\frac i2 Q_3,\qquad 
E_+=\frac12(Q_2+i Q_1),
$$
i.e.,
\begin{equation}
E_-:=\frac{\partial}{\partial z}+\ov z^2 \frac{\partial}{\partial \ov z},
\qquad 
E_0:=z\frac{\partial}{\partial z}-\ov z\frac{\partial}{\partial \ov z},\qquad
E_+:=z^2\frac{\partial}{\partial z}+\frac{\partial}{\partial \ov z}.
\label{eq:basis-E}
\end{equation}
They satisfy commutation relations
\begin{equation}
[E_0,E_-]=-E_-,\qquad [E_0,E_+]=E_+,\qquad [E_-,E_+]=2E_0.
\end{equation}

The $\SU(2)$-{\it invariant Laplacian} on $\C$ 
is
\begin{equation}
\DeltA=
\frac12 (E_+E_-+E_-E_+)- E_0^2=
\frac14(Q_1^2+Q_2^2+Q_3^2)=
(1+z\ov z)^2 \,\frac{\partial^2}{\partial z\, \partial \ov z},
\end{equation}
the $\DeltA$ is the usual Laplacian on the sphere $S^2$ written
in the stereographic coordinates.

The complexification of the {\it real} Lie algebra $\mathfrak{sl}(2,\C)$
is
$$
\mathfrak{sl}(2,\C)_\C\simeq \mathfrak{sl}(2,\C)\oplus \mathfrak{sl}(2,\C),
$$
The $\mathfrak{su}(2)_\C\simeq \mathfrak{sl}(2,\C)$ is
the diagonal subalgebra in the direct sum.
We define the following vector fields $\in \mathfrak{sl}(2,\C)_\C$
completing the collection \eqref{eq:basis-E} to the basis
in $\mathfrak{sl}(2,\C)_\C$:
\begin{equation}
F_-:=\frac{\partial}{\partial z}-\ov z^2 \frac{\partial}{\partial \ov z},
\qquad 
F_0:=z\frac{\partial}{\partial z}+\ov z\frac{\partial}{\partial \ov z},\qquad
F_+:=z^2\frac{\partial}{\partial z}-\frac{\partial}{\partial \ov z}.
\label{eq:basis-F}
\end{equation}
They satisfy relations
\begin{gather}
[E_-,F_-]= [E_0,F_0]=[E_+,F_+]=0;
\\
[E_0, F_-]=[E_-,F_0]=-F_-,\qquad [E_0, F_+]=[E_+,F_0]=F_+;
\\
 [E_-,F_+]=[E_+,F_-]=2 F_0
 \\
 [F_0,F_-]=-E_-,\qquad [F_0,F_+]=E_+,\qquad [F_-,F_+]=2E_0
\end{gather}
(so, we have a $\Z_2$-grading).

\sm

{\bf \punct Realizations of irreducible representations of $\SU(2)$.}
Recall a model for irreducible representations of $\SU(2)$
proposed in Berezin \cite{Ber}, \S 5 (see more details in this paper).
Fix $n=0$, 1, 2, \dots. We consider the space $\cB_m$ of
holomorphic functions $p(u)$ on $\C$
satisfying the condition
$$
\int_\C p(u)\,\ov{q(u)}\,(1+u\ov u)^{-m-2}\,\wh d u<\infty
.$$
This  space consists of
  polynomials of degree $\le m$.
We equip $\cB_m$  with the inner product
$$
\bigl\la p,q \bigr\ra_m:=
\frac {m+1}\pi\int_\C p(u)\,\ov{q(u)}\,(1+u\ov u)^{-m-2}\,\wh d u.
$$ 
The group $\SU(2)$ acts in the $(m+1)$-dimensional
 space $\cB_m$ by the formula
$$
\tau_m\begin{pmatrix}
a&b\\-\ov b&\ov a
\end{pmatrix} p(u)=
p\Bigl(\frac{b+\ov a z}{a-\ov b u}\Bigr)\,(a-\ov b u)^m,
$$
these operators are unitary. The Lie algebra
$\mathfrak{su}(2)_\C$
acts by operators
\begin{equation}
E^{(u)}_-:=\frac{\partial}{\partial u},\qquad
E^{(u)}_0:=u \frac{\partial}{\partial u}+\frac{m}{2},
\qquad E^{(u)}_+:=u^2\frac{\partial}{\partial u}+m u.
\end{equation}

Notice that vectors $1$, $u$, \dots, $u^m\in\cB_m$
are pairwise orthogonal and
\begin{equation}
\|u^k\|_m^2=\frac{k!\, (m-k)!}{m!}.
\label{eq:norms-monomials}
\end{equation}
Indeed,
\begin{multline*}
\|u^k\|^2_m
=\frac{m+1}\pi\int_\C |u|^{2k}(1+|u|^2)^{-m-2}\wh d u
=\frac{2\pi(m+1)}{\pi}\int_0^\infty \frac{r^{2k}\,r\,dr}{(1+r^2)^{m+2}}=
\\=
(m+1)
\int_0^\infty \frac{x^k\,dx}{(1+x)^{m+2}}=(m+1)\,B(k+1,m-k+1)=
\frac{k!\, (m-k)!}{m!}.
\end{multline*}
Here and below we use the following standard beta-integral (see for example \cite{AAR},
formula (1.1.20)):
$$
\int_0^\infty \frac{x^{a-1}dx}{(1+x)^{a+b}}=B(a,b).
$$

{\sc Remark.} Formula \eqref{eq:norms-monomials}
implies that the Hilbert space $\cB_m$ is determined by
the reproducing kernel 
$$L(u,w)=(1+u\ov w)^m$$
on $\C$.
In other words, for any $a\in\C$ consider the function
$\Phi_a(u):=L(u,a)$. Then 
$$\qquad\qquad\qquad\qquad\qquad\qquad\quad
p(a)=\bigl\la  p, \Phi_a \bigr\ra_{\cB_m}
\qquad\qquad\qquad\qquad\qquad\qquad\quad
$$
for any $p\in \cB_m$.
\hfill $\boxtimes$

\sm

{\bf \punct Intertwining operators.}
It is well-known that the representation of 
$\SO(3)$ in $L^2(S^2)$ is a direct sum
$$
L^2(S^1)=\bigoplus_{n=0}^\infty \cH_n
$$
of $(2n+1)$-dimensional irreducible representations 
of $\SO(3)$. Subspaces $\cH_n$ are spaces 
of eigenfunctions $\psi$ of the spherical Laplacian,
$$
\DeltA\psi=-n(n+1)\psi.
$$
The functions
$$
\psi_j^{(n)}:=
E_+^j \frac{\ov z^n}{(1+z \ov z)^n}
=\Bigl(z^2\frac{\partial}{\partial z}+\frac{\partial}{\partial \ov z}\Bigr)^j 
\frac{\ov z^n}{(1+z \ov z)^n},\quad \text{where $j=0$, \dots, $2n$,}
$$
form an orthogonal basis in $\cH_n$.
These functions also are eigenfunctions of $E_0$,
$$
E_0\, \psi_j^{(n)}=(-n+j) \,\psi_j^{(n)}.
$$

 The representations of $\SO(3)$ in $\cH_n$ are
equivalent to
the representations $\tau_{2n}$ of $\SU(2)/\Z_2$ defined 
in the previous subsection.

\sm

Denote by $K(z, \ov z; u,n)$
the kernel
\begin{equation}
K(z, \ov z; u,n):=
\Bigl(\frac{(1+\ov z u)(z-u}{1+z\ov z}\Bigr)^n.
\end{equation}

\begin{proposition}
\label{pr:intertwiner}
For a fixed $n$ the operator
\begin{equation}
A_n f(u):= \frac1\pi\int_\C K(z, \ov z; u,n)\, 
f(z,\ov z)\frac{\wh d z}{(1+z\ov z)^2}
\label{eq:A-n}
\end{equation}
is an $\SU(2)$-intertwining operator
$$
L^2\bigl(\C,(1+z\ov z)^{-2} \wh d z\bigr)\to \cB_{2n}
.$$
\end{proposition}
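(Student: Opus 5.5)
The plan is to verify the intertwining identity $A_n\bigl(f\circ h\bigr)=\tau_{2n}(h)\,A_n f$ directly by a change of variables, using the transformation formulas \eqref{eq:1+zu}, \eqref{eq:z-u} and the invariance of the measure \eqref{eq:whdzh}. Boundedness (indeed finiteness of rank) of $A_n$ is immediate, since $|K|\le$ const on the sphere for fixed $u$ (the kernel comes from the bounded expression \eqref{eq:K-x}). Also, for fixed $z$, $K$ is a polynomial in $u$ of degree $\le 2n$. So $A_n$ maps $L^2$ into $\cB_{2n}$, and only equivariance needs work.

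\emph{Step 1: transformation law of the kernel.} Let $h=\begin{pmatrix}a&b\\-\ov b&\ov a\end{pmatrix}\in\SU(2)$. I would write the action on $\ov\C$ and on the $u$-variable with the same M\"obius map, as in \eqref{eq:1+zu}, \eqref{eq:z-u}. I would also record the companion identity
$$
1+z^{[h]}\ov{z^{[h]}}=\frac{1+z\ov z}{(a-\ov b z)(\ov a- b\ov z)}.
$$
This follows from \eqref{eq:1+zu} by putting $u=z$, and it is a routine check with $|a|^2+|b|^2=1$. Multiplying \eqref{eq:1+zu} and \eqref{eq:z-u} and dividing by this identity, the factors in $z$ cancel, leaving
$$
K\bigl(z^{[h]},\ov{z^{[h]}};u^{[h]},n\bigr)=K(z,\ov z;u,n)\,(a-\ov b u)^{-2n}.
$$
The delicate point is matching the denominators exactly: the $z$-dependent factors $(\ov a-b\ov z)$ and $(a-\ov b z)$ must cancel precisely, and only the $u$-factor should survive. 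I would check this first, including the exact form of the M\"obius action convention.

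\emph{Step 2: change of variables.} Define the action on functions by $(\rho(h)f)(z)=f(z^{[h]})$, consistent with the formula for $\tau_m$. Then
$$
A_n(\rho(h)f)(u)=\frac1\pi\int K(z,\ov z;u,n)\,f(z^{[h]})\,\frac{\wh dz}{(1+z\ov z)^2}.
$$
Substitute $w=z^{[h]}$, using \eqref{eq:whdzh}, so that $z=w^{[h^{-1}]}$. By Step 1 applied to $h^{-1}$ (or read backwards), the kernel becomes $K(w,\ov w;u^{[h]},n)(a-\ov b u)^{2n}$. The extra factor depends only on $u$, so it comes out of the integral. This gives $(A_nf)(u^{[h]})(a-\ov bu)^{2n}=\tau_{2n}(h)A_nf(u)$, as required.

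The main obstacle is bookkeeping with conventions: left versus right action, $h$ versus $h^{-1}$, and whether the $u$-variable transforms by the same M\"obius map as $z$ (the formula for $\tau_m$ contains a typo, $z$ for $u$). I would fix these by testing the diagonal subgroup $h=\mathrm{diag}(e^{i\phi},e^{-i\phi})$, where everything is explicit. As an alternative check, I would verify the infinitesimal version. Differentiating under the integral and integrating by parts (the vector fields $E_\pm,E_0$ preserve the measure \eqref{eq:measure}), it suffices to show that $E_j^{(z)}K=-E_j^{(u)}K$ with $m=2n$ for the basis \eqref{eq:basis-E}. This is a direct computation on $\log K=n\bigl(\log(1+\ov z u)+\log(z-u)-\log(1+z\ov z)\bigr)$. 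For example, for $E_-$ we get $n\bigl(\frac{1}{z-u}-\frac{\ov z}{1+z\ov z}+\frac{\ov z^2u}{1+\ov zu}-\frac{z\ov z^2}{1+z\ov z}\bigr)$, which should equal $-\partial_u\log K$. Since $\SU(2)$ is connected, the infinitesimal identity yields the group statement.
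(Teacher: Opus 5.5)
Your proof is correct and is essentially the paper's. The kernel law \eqref{eq:KK} comes from \eqref{eq:1+zu} and \eqref{eq:z-u}, plus their $u=z$ specialization for the denominator $1+z\ov z$, which the paper uses tacitly; this is followed by the substitution $\xi=z^{[h]}$ using \eqref{eq:whdzh}.

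One caution about your optional infinitesimal check. Differentiating $\tau_{2n}$ and $f\mapsto f(z^{[h]})$ as defined gives the $u$-side generators $\partial_u$, $u\partial_u-n$, $u^2\partial_u-2nu$. Indeed $E_0K=-(u\partial_u-n)K$ and $E_+K=-(u^2\partial_u-2nu)K$. So the identity fails if you plug in the displayed $E^{(u)}_0=u\partial_u+\frac m2$ and $E^{(u)}_+=u^2\partial_u+mu$; the latter does not even preserve $\cB_m$. Your diagonal-subgroup test is exactly what detects this sign.
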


{\sc Proof.} Fix $n$.
First, let us check that the integral is well defined. 
 The expression 
$$
K(\cdot)=(\dots)^n=\sum_{j=0}^{2n}
c_j(z,\ov z)  u^j
$$
is a polynomial in $u$ of degree $2n$
with coefficients of the form
$$
c_j(z,\ov z)= \frac{\kappa_j(z,\ov z)}
{(1+z\ov z)^n}
,$$
where $\kappa_j(z,\ov z)$ are polynomials
of degree $2n$. So, the coefficients $c_j(z,\ov z)$
are bounded functions, they are contained in our $L^2$.
Hence, for $f\in L^2$,
$$
A_n f(u)=\frac1\pi\sum_{j=0}^{2n} u^j\int_\C c_j(z,\ov z)\, f(z,\ov z)
\frac{\wh d z}{(1+z\ov z)^2}=\frac1\pi\sum_{j=0}^{2n} u^j\cdot
\bigl\la\, c_j,\ov {\phantom{.}f \phantom{.}}\, \bigr\ra_{L^2(\dots)}.
$$
The inner products are well-defined,  therefore we get a well-defined operator $L^2\to \cB_{2n}$.

\sm

Keeping in  mind \eqref{eq:1+zu}--\eqref{eq:z-u}, we get
that for $h\in\SU(2)$
\begin{equation}
K\bigl(z^{[h]}, \ov {z^{[h]}};u^{[h]},n\bigr)
=\frac{ K(z,\ov z; u,n)}{(a-\ov b u)^{2n}} 
.
\label{eq:KK}
\end{equation}
Consider the composition of the transformation
$f(z,\ov z)\mapsto f\bigl(z^{[h]}, \ov{z^{[h]}}\bigr)$
and the operator $A_n$.
Keeping in the mind   \eqref{eq:whdzh} and \eqref{eq:KK},
we get
\begin{multline*}
\frac1\pi\int_\C K(z,\ov z;u,n)\, f\bigl(z^{[h]}, \ov{z^{[h]}}\bigr)\,
\frac{\wh d z}{(1+z \ov z)^2}
=\\=
\frac1\pi\int_\C K(z,\ov z;u,n)\,f\bigl(z^{[h]}, \ov{z^{[h]}}\bigr)
\,
\frac{\wh d z^{[h]}}{(1+z^{[h]} \ov {z^{[h]}})^2}
=\\=
\frac1\pi\int_\C 
K\bigl(z^{[h]} ,\ov {z^{[h]}} ;u^{[h]} ,n\bigr)\,
(a-\ov b u)^{2n}\,
f\bigl(z^{[h]}, \ov{z^{[h]}}\bigr)\,
\frac{\wh d z^{[h]}}{(1+z^{[h]} \ov {z^{[h]}})^2}.
\end{multline*}
We substitute $\xi=z^{[h]}$ and come to
$$
\qquad
\biggl(\frac1\pi\int_\C 
K\bigl(\xi ,\ov \xi ;u^{[h]} ,n\bigr)\,
f(\xi, \ov \xi)
\frac{\wh d \xi}{(1+\xi \ov \xi)^2}\biggr)
\cdot (a-\ov b u)^{2n}=
\tau_{2n} \, A_n f(u).
\qquad
\square
$$

\sm 

{\bf \punct The transformation $\boldsymbol{\{A_n\}}$
in the space  $\boldsymbol{L^2(S^2)}$.}

\begin{lemma}
\label{l:norms}
Let $\psi\in \cH_n$.
Then 
$$
\|A_n\psi\|^2_{\cB_{2n}}=\frac{n!\,n!}{(2n+1)!}\, \|\psi\|^2_{\cH_n}.
$$
\end{lemma}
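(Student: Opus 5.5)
The plan is to reduce the statement to a single computation by Schur's lemma. By Proposition \ref{pr:intertwiner}, $A_n$ intertwines the action of $\SU(2)$ on $L^2(S^2)$ with $\tau_{2n}$ on $\cB_{2n}$, so its restriction to $\cH_n$ is an intertwiner $\cH_n\to\cB_{2n}$. Both spaces are irreducible, both carry unitary actions (the norm on $\cH_n$ being the $L^2(d\sigma)$ norm), and they are equivalent. Schur's lemma then gives that $A_n|_{\cH_n}$ is a scalar multiple of a unitary operator. Hence there is a constant $c_n\ge 0$ with
$$\|A_n\psi\|^2_{\cB_{2n}}=c_n\,\|\psi\|^2_{\cH_n}\qquad\text{for all }\psi\in\cH_n.$$
It therefore suffices to evaluate both sides on one convenient nonzero vector.

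I will take the lowest weight vector $\psi_0^{(n)}=\ov z^n/(1+z\ov z)^n$. Its $L^2$ norm for the measure \eqref{eq:measure} is computed in polar coordinates with the beta-integral quoted above:
$$\|\psi_0^{(n)}\|^2=\frac1\pi\int_\C\frac{|z|^{2n}\,\wh d z}{(1+|z|^2)^{2n+2}}=\int_0^\infty\frac{x^n\,dx}{(1+x)^{2n+2}}=B(n+1,n+1)=\frac{n!\,n!}{(2n+1)!}.$$

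Next I compute $A_n\psi_0^{(n)}$. Since $E_0\psi_0^{(n)}=-n\,\psi_0^{(n)}$ and $E_0^{(u)}=u\frac{\partial}{\partial u}+n$ on $\cB_{2n}$, the image must be a constant. One can also see this directly: expand $(1+\ov z u)^n(z-u)^n$ and multiply by $\ov z^n$. The term carrying $u^{a+b}$ contains $z^{n-b}\ov z^{\,n+a}$, and by rotation invariance of the measure its integral vanishes unless $n-b=n+a$, i.e.\ $a=b=0$. So only the $u^0$ term survives:
$$A_n\psi_0^{(n)}=\frac1\pi\int_\C\frac{z^n\ov z^n\,\wh d z}{(1+|z|^2)^{2n+2}}=\frac{n!\,n!}{(2n+1)!}.$$
By \eqref{eq:norms-monomials} with $k=0$, $\|1\|_{2n}=1$. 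Hence $\|A_n\psi_0^{(n)}\|^2=\bigl(n!\,n!/(2n+1)!\bigr)^2$, and dividing by $\|\psi_0^{(n)}\|^2$ gives $c_n=n!\,n!/(2n+1)!$, as claimed.

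The only delicate point is the Schur-lemma step. It needs the $\SU(2)$ actions on $\cH_n$ and on $\cB_{2n}$ to be unitary with respect to the stated norms (\eqref{eq:whdzh} for $\cH_n$ and the unitarity of $\tau_{2n}$ for $\cB_{2n}$). It also needs the two representations to be irreducible and equivalent, which was recalled above. Everything else is the beta-integral computation.
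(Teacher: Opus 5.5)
Your proof is correct and is essentially the paper's own argument. Schur's lemma reduces everything to $\psi_0^{(n)}$, whose norm is $B(n+1,n+1)$, and the angular integration leaves only the constant term of $A_n\psi_0^{(n)}$, which equals the same beta value.

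The only flaw is your weight aside, which does not follow from the formula you quote. With $E_0^{(u)}=u\partial_u+n$, the image $p=A_n\psi_0^{(n)}$ would satisfy $u\partial_u p=-2np$, hence $p=0$ for $n\ge 1$. The correct operator is $u\partial_u-n$, as one checks from $E_0K=(n-u\partial_u)K$, so the paper's formula carries a sign misprint. Your direct rotation-invariance computation makes the aside unnecessary anyway.
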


{\sc Proof.}
The operator $A_n:\cH_n \to\cB_{2n}$   intertwines
two irreducible unitary representations; by the Schur lemma,
the ratio $\|A_n\psi\|^2/\|\psi\|^2$ does not depend on $\psi$.
Let us evaluate it for  the function
$\psi_0^{(n)}(z,\ov z)=\ov z^n/(1+z \ov z)^n\in \cH_n$.

We have
\begin{multline*}
\|\psi_0^{(n)}\|^2_{L^2(\C,(1+z\ov z)^{-2}\wh d z)}=
\frac 1\pi \int_\C \frac{\ov z^n}{(1+z\ov z)^n}
\cdot \frac{z^n}{(1+z\ov z)^n}\cdot \frac{\wh d z}{(1+z\ov z)^2} 
=\\=
\frac{2\pi}{\pi}\int_0^{\infty}
\frac{r^{2n}\cdot rdr}{(1+r^2)^{2n+2}}=\int_0^\infty\frac{\rho^n d\rho}{(1+\rho)^{2n+2}}=B(n+1,n+1)=\frac{n!\,n!}{(2n+1)!}.
\end{multline*}
Next,
\begin{multline*}
A_n \psi_0^{(n)}(u)
=\frac 1\pi \int_\C \Bigl(\frac{(1+\ov z u)(z-u}{1+z\ov z}\Bigr)^n
\cdot
  \frac{\ov z^n}{(1+z\ov z)^n}\cdot \frac{\wh d z}{(1+z\ov z)^2} 
  =\\=
 \frac 1\pi\int_0^\infty \int_0^{2\pi}
 \frac{\bigl[1+u r e^{-i\phi} \bigr]^n \, \bigl[r e^{i\phi}-u\bigr]^n\, r^n e^{-i\phi n}\,r\,d\phi\,dr}{(1+r^2)^{2n+2}} 
 =\\=
  \frac 1\pi\int_0^\infty \int_0^{2\pi}
 \frac{\bigl[1+u r e^{-i\phi} \bigr]^n \, \bigl[r -u e^{-i\phi}\bigr]^n\, r^n \,r\,d\phi\,dr}{(1+r^2)^{2n+2}} .
\end{multline*}
We open brackets
$[\dots]^n [\dots]^n$
and get
$$
[\dots]^n [\dots]^n=r^n+\sum_{k=1}^{2n} p_k(u,r) e^{-ik\phi}.
$$
After integration $\int_0^{2\pi}$ all summands except the first one vanish
and we get
$$
2\int_0^\infty\frac{r^n\cdot r^n\,r\,dr}{(1+r^2)^{2n+2}}=
\frac{n!\,n!}{(2n+1)!}= \frac{n!\,n!}{(2n+1)!}\cdot u^0.
$$
By \eqref{eq:norms-monomials}, we have
$$
\|u^0\|^2_{\cB_{2n}}=1.
$$
Therefore,
$$
\|A_m \psi_0^{(n)}\|_{\cB_{2n}}^2=\Bigl(\frac{n!\,n!}{(2n+1)!}\Bigr)^2,
$$
and we come do the desired statement.
\hfill $\square$ 

\sm

Thus, for a function $f$ on the sphere $S^2$
we have a sequence
$$
(\frf_0,\frf_1,\frf_2,\dots)=
(A_0 f,A_1 f, A_2 f,\dots), 
$$
where $\frf_j=A_j f\in \cB_{2n}$.
We regard  such sequences as functions $\frF(u,n)$
depending on a nonnegative integer $n$ and a complex variable
$u$; for a given $k$ the degree 
of a polynomial $\frF(n,u)\Bigl|_{n=k}$ is $\le 2k$.

Clearly, the image of $L^2(S^2)$ 
consists of functions $\frF(u,n)$ satisfying
$$
\sum_{k\ge 0} \frac{(2k+1)!}{k!\,k!}\,\,
\Bigl\| \frF\bigl|_{n=k} \Bigr\|^2_{\cB_{2k}}<\infty.
$$
It
is a Hilbert with inner product defined by 
$$
\bigl\la \frF,\frF'\bigr\ra=\sum_{k=0}^\infty 
 \frac{(2k+1)!}{k!\,k!}\,\,
  \Bigl\la \frF\bigl|_{n=k},\frF'\bigl|_{n=k} \Bigr\ra_{\cB_{2k}}
  .
$$
By the Stirling formula, the asymptotics of coefficients
in this formula is
\begin{equation}
\frac{(2k+1)!}{k!\,k!}
=
(2k+1)\cdot \frac{(2k)!}{k!\,k!}
 \sim 2k\cdot \frac{2^{2k}}{\sqrt {\pi k}}
\qquad
\text {as $k\to+\infty$.}
\label{eq:stirling}
\end{equation}

{\bf \punct The transformation $\boldsymbol{\{A_n\}}$
in the space  $\boldsymbol{C^\infty(S^2)}$.%
\label{ss:image-C-infty}}
A function
$$
F=\sum_{n=0}^\infty \psi_n, \qquad \text{where $\psi_n\in \cH_n$}
$$
is $C^\infty$-smooth if and if for any%
\footnote{The operator $(1-\DeltA)$ acts in
each $\cH_n$ as a multiplication by $1+n(n+1)$.
It is an elliptic positive definite  operator of order 2, therefore its powers
$(1-\DeltA)^{t/2}$, where $t\in \R$, are continuous invertible
pseudo-differential operators from $L^2(S^2)$ to the Sobolev space $W_2^{-t}(S^2)$,
see for example \cite{Tay}, Section II.6.
The space $C^\infty(S^2)$ is $\cap_{t>0} W_2^{t}(S^2)$.} $L>0$
the following estimate holds
$$
\|\psi_n\|_{L^2(S^2)}=o(n^{-L}) \quad \text {as $n\to+\infty$.}
$$
By \eqref{eq:stirling}, the image $\cW^\infty$ of $C^\infty(S^2)$
consits of all functions $\frF(u,n)$ such that for all $L>0$ 
$$
\Bigl\|\frF\bigr|_{n=k} \Bigr\|_{\cB_{2k}}=o(2^{-k}k^{-L})\qquad \text {as $k\to+\infty$.}
$$



\section{Calculations}

\COUNTERS

{\bf \punct Smoothness of the kernel.}
In the proof of Proposition \ref{pr:intertwiner}
we show that the operators $A_n$ are well-defined. This argument
is not sufficient for differentiation of the integral 
\eqref{eq:A-n} with respect to conformal vector fields.
However, the singularity at $\infty$ is absent, actually
our expression is smooth on the sphere  $S^2$.

To observe the smoothness, we represent the expression in the big bracket
in \eqref{eq:K-x} near the point $x=(0,0,1)$
as
\begin{equation*}
\frac12\Bigl[\frac{u(x_1^2+x_2^2)}{1-x_3}+ \Bigl(-u^2(x_1-i x_2)+(x_1+i x_2)\Bigr)
-(1-x_3) u\Bigr].
\end{equation*}
The second and the third summands in the brackets $[\dots]$
 are smooth on $S^2$. 
In the first summand, we
eliminate the irrationality in the denominator:
$$
\frac{u(x_1^2+x_2^2)(1+x_3)}{(1-x_3)(1+x_3)}=
\frac{u(x_1^2+x_2^2)(1+x_3)}{x_1^2+x_2^2}= u(1+x_3)
,
$$
and we again get a smooth expression.

\sm

{\bf \punct The verification of Theorem \ref{th:1}.}
First, we verify \eqref{eq:drob1}. Denote by 
$$
\cL:=-\frac1{2(n+1)(2n+1)}\, T_+\, \frac{\partial^2}{\partial u^2}
+\frac{n}{2(2n+1)}\,T_-
$$
 the operator
in the right side of \eqref{eq:drob1}. Our statement
is equivalent to the identity
$$
\cL \,K(z,\ov z; u, n)=\frac{\ov z}{1+z\ov z}\cdot K(z,\ov z; u, n).
$$
Now it can be verified by any system of computer algebra%
\footnote{The author used Wolfram Mathematica.}. We wish to explain how to find this identity and how to verify it by hands.

Let us transform to a convenient for us form the expressions
$$
T_- K,\qquad \frac{\partial}{\partial u} T_+,
\qquad \frac{\partial^2}{\partial u^2} T_+.
$$

Denote
$$
\kappa:=\frac{(1+\ov z u)(z-u)}{1+z\ov z},
$$
so $K=\kappa^n$.
Notice that 
$$
\kappa^{-1}=
\frac{1+z\ov z}{(1+\ov z u)(z-u)}=\frac{\ov z}{1+ \ov z u}+
\frac1{z-u}.
$$
Therefore,
\begin{equation}
T_- K=\Bigl(\frac{\ov z}{1+ \ov z u}+
\frac1{z-u}\Bigr) \cdot K.
\label{eq:T-K}
\end{equation}

Applying the formula
\begin{equation}
\frac{\partial}{\partial x}\prod_j f_j(x)^{m_j} 
=
\Bigl(\sum_j \frac{m\,  f'_j(x) }{f_j(x)} \Bigr)
\cdot
\prod_j f_j(x)^{m_j}, 
\label{eq:logarithmic}
\end{equation}
we get
\begin{multline}
\frac{\partial^2}{\partial u^2}\, T_+ K
=\frac{\partial^2}{\partial u^2} \kappa^{n+1}=
\frac{\partial}{\partial u}\Bigl[(n+1)\Bigl(\frac{\ov z}{1+\ov z u}-\frac{1}{z-u} \Bigr) \kappa^{n+1}\Bigr]
=\\=
\Bigl\{(n+1)^2\Bigl(\frac{\ov z}{1+\ov z u}-\frac{1}{z-u} \Bigr)^2
-(n+1)\Bigl(\frac{\ov z^2}{(1+\ov z u)^2}+\frac{1}{(z-u)^2} \Bigr)\Bigl\}
\,\kappa^{n+1}
=\\=
\Bigl\{n(n+1)\Bigl(\frac{\ov z^2}{(1+\ov z u)^2}+\frac{1}{(z-u)^2} \Bigr)-2(n+1)^2 \frac{\ov z}{(1+\ov z u)(z-u)}
\Bigr\}\kappa\cdot \kappa^n.
\end{multline}
We transform summands in the right hand side,
$$
\frac{\ov z}{(1+\ov z u)(z-u)}\cdot \kappa=\frac{\ov z}{1+z \ov z};
$$
$$
\frac{\ov z^2}{(1+\ov z u)^2}\cdot \kappa=
\frac{\ov z^2(z-u)}{(1+\ov z u)(1+\ov z z)}
=\frac{\ov z}{1+\ov z u} - \frac{\ov z}{1+\ov z z};
$$
$$
\frac{1}{(z-u)^2}\cdot \kappa =\frac{1+\ov z u}{(z-u)(1+\ov z z)}
=\frac{1}{z-u}-\frac{\ov z}{1+\ov z z}.
$$
Thus, we have
\begin{equation}
\frac{\partial^2}{\partial u^2}\, T_+ K
=\Bigl\{
n(n+1)\Bigl(\frac{\ov z}{1+\ov z u}+ \frac{1}{z-u}\Bigr)
+ \frac{C\ov z}{1+z\ov z}
\Bigr\}\cdot K,
\label{eq:partial2K}
\end{equation}
where $C=-2n(n+1)-2(n+1)^2$. Comparing the last expression
with \eqref{eq:T-K} we observe that
$$
u^2\frac{\partial^2}{\partial u^2}\,T_+\, K-2n(n+1) \,T_- K=
-2(n+1)(2n+1)\cdot \frac{\ov z}{1+z \ov z}\cdot K.
$$
This establishes \eqref{eq:drob1}.

\sm

Next,
$$
\Bigl[z^2\frac{\partial}{\partial z}+\frac{\partial}{\partial \ov z},\,\,
\frac{\ov z}{1+z \ov z}\Bigr]=\frac{1 - z\ov z}{1+z \ov z}.
$$
We know the operator corresponding $z^2\frac{\partial}{\partial z}+\frac{\partial}{\partial \ov z}$  (see \eqref{eq:triv3}),
and we obtained the operator $\cL$ corresponding to $\frac{\ov z}{1+z \ov z}$
(see \eqref{eq:triv3}). Evaluating their commutator
$$
\Bigl[u^2 \frac{\partial}{\partial u}+2nu,\,\,
-\frac1{2(n+1)(2n+1)}\, T_+\, \frac{\partial^2}{\partial u^2}
+\frac{n}{2(2n+1)}\,T_-\Bigr],
$$
we come to  formula \eqref{eq:drob2}.

\sm

Repeating the same reasoning with the commutator
$$
\Bigl[z^2\frac{\partial}{\partial z}+\frac{\partial}{\partial \ov z},
\,\,
\frac{1-z\ov z}{1+z \ov z}\Bigr]=-\frac{2 z}{1+z \ov z},
$$
after straightforward calculations
come to \eqref{eq:drob3}.

\sm

{\bf \punct Integration by parts.}
We say that an operator $Q$ in the space of functions
on $\C$ is {\it formally transposed} to an operator $R$
if
$$
\int_\C  \bigl(Qf(z,\ov z)\bigr)\cdot g(z,\ov z)
\,(1+z \ov z)^{-2}  \wh d z=
\int_\C  f(z,\ov z)\cdot \bigl(Rg(z,\ov z)\bigr) \,
(1+z \ov z)^{-2}  \wh d z
$$
for all $f$, $g$, which are actually are smooth on the sphere $S^2$.
Integrating by parts, 
\begin{multline}
\int_\C  \Bigl(\frac{\partial}{\partial z} f(z)\Bigr) g(z,\ov z)
\frac{\wh d z}{(1+z \ov z)^2}
=-\int_\C f(z,\ov z) \cdot \frac{\partial}{\partial z}\bigl( g(z,\ov z)
(1+z \ov z)^{-2} \bigr) \wh d z
=\\=
-\int_\C f(z,\ov z) \cdot \Bigl(\frac{\partial}{\partial z}-
\frac{2\ov z}{1+z\ov z}\Bigr)\, g(z,\ov z)\cdot
\frac{\wh d z}{(1+z \ov z)^{2}}
,
\end{multline}
we get
$$
\Bigl(\frac{\partial}{\partial z}\Bigr)^t
=-\frac{\partial}{\partial z}+ \frac{2\ov z}{1+z \ov z},
\qquad \Bigl(\frac{\partial}{\partial \ov z}\Bigr)^t
=-\frac{\partial}{\partial \ov z}+ \frac{2 z}{1+z \ov z}.
$$
For operators \eqref{eq:basis-E},
we have 
$$
E_\pm^t=-E_\pm^t, \qquad E_0^t=-E_0.
$$
For operators
\eqref{eq:basis-F}, straightforward calculations show
\begin{equation}
F_-^t=\Bigl(\frac{\partial}{\partial z}-
\ov z^2 \frac{\partial}{\partial \ov z} \Bigr)^t
=-F_-+\frac{4 \ov z}{1+z\ov z};
\label{eq:FF}
\end{equation}
$$
F_0^t=\Bigl(z\frac{\partial}{\partial z}+
\ov z \frac{\partial}{\partial \ov z} \Bigr)^t=
-F_0-2\frac{1-z \ov z}{1+z\ov z};
$$
$$
F_+^t=\Bigl(z^2\frac{\partial}{\partial z}-
 \frac{\partial}{\partial \ov z} \Bigr)^t=
-F_+-\frac{4z}{1+z\ov z}.
$$

{\bf \punct The proof of Theorem \ref{th:2}.}
Let us verify the statement \eqref{eq:conf1}.
By \eqref{eq:FF}, we get
\begin{multline*}
\int_\C K(z,\ov z;u,n)\, F_- g(z,\ov z)\, \frac{\wh d z}{(1+z\ov z)^2}
=\\=
\int_\C\Bigl[\Bigl(-F_-+\frac{4\ov z}{1+z\ov z}\Bigr)
K(z,\ov z;u,n)\Bigr]\,  g(z,\ov z)\, \frac{\wh d z}{(1+z\ov z)^2}.
\end{multline*}
So, we need a differential-difference  operator $\cR$
such that
$$
\Bigl(-F_-+\frac{4\ov z}{1+z\ov z}\Bigr)
K(z,\ov z;u,n) =\cR\, K(z,\ov z;u,n).
$$
By \eqref{eq:drob1}, it is sufficient to find $\cR'$
such that
$$-F_-\,K(z,\ov z;u,n) =\cR'\, K(z,\ov z;u,n).$$
It  to verify that
\begin{equation}
F_-\, K=n\Bigl(\frac{\ov z}{1+\ov z u}+\frac1{z-u}-
\frac{\ov z}{1+z\ov z}\Bigr)\, K.
\label{eq:FK}
\end{equation}
This identity can be found by manipulations with the formula 
\eqref{eq:logarithmic}.

We compare expressions
for
\begin{equation}
K^{-1}\cdot T_- K,\qquad K^{-1}\cdot \frac{\partial^2}{\partial u^2}T_+K,
\qquad K^{-1}\cdot F_-\, K
,
\label{eq:expressions}
\end{equation}
see \eqref{eq:T-K}, \eqref{eq:partial2K}, \eqref{eq:FK}.
Each expression has form
$$
A_j \Bigl(\frac{\ov z}{1+\ov z u}+\frac1{z-u}\Bigr)+ B_j\cdot \frac{\ov z}{1+z\ov z},
$$
where $A_j$, $B_j$ are polynomials in $n$. Therefore 
\eqref{eq:expressions} are linearly dependent with coefficients depending on $n$.
This allows to express $F_-\, K$ as a linear combination
of $T_- K$ and $\frac{\partial^2}{\partial u^2}T_+K$.


Next, $[E_+,F_-]=2 F_0$. Since we know corresponding
$E_+$ and $F_-$, after straightforward calculations we  evaluate the operator \eqref{eq:conf2} corresponding to $F_0$.

Finally, $[E_+,F_0]=F_+$, and this allow to evaluate \eqref{eq:conf3}.
 
\sm

{\bf \punct Proof of Corollary \ref{cor:}.%
\label{ss:proof-corollary}}
We begin with three simple observations.

\begin{observation}
\label{obs:1}
The space  $\C[\cS]$
is closed with respect to the product.
\end{observation}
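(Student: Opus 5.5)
Observation \ref{obs:1} reduces to checking that products of the generators stay in the span.

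Take two generators $\Theta^{\epsilon}_{n,k}$ and $\Theta^{\epsilon'}_{m,l}$ with $\epsilon,\epsilon'\in\{+,-\}$. There are two cases.

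\emph{Same sign.} We have
$$
\Theta^+_{n,k}\,\Theta^+_{m,l}=\frac{z^{k+l}}{(1+z\ov z)^{n+m}}=\Theta^+_{n+m,k+l},
$$
and $k+l\le n+m$. The product of two $\Theta^-$'s is handled identically.

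\emph{Opposite signs.} Suppose $k\ge l$. Then
$$
\Theta^+_{n,k}\,\Theta^-_{m,l}=\frac{z^{k-l}\,(z\ov z)^l}{(1+z\ov z)^{n+m}} .
$$
Write $z\ov z=(1+z\ov z)-1$ and expand $(z\ov z)^l$ binomially. This gives
$$
\Theta^+_{n,k}\,\Theta^-_{m,l}=\sum_{j=0}^{l}\binom lj(-1)^{l-j}\,\frac{z^{k-l}}{(1+z\ov z)^{n+m-j}}
=\sum_{j=0}^{l}\binom lj(-1)^{l-j}\,\Theta^+_{n+m-j,\,k-l}.
$$
Each summand is admissible, because $k-l\le n+m-j$ follows from $k\le n$ and $j\le l\le m$. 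The case $k<l$ is the mirror image and produces terms $\Theta^-_{n+m-j,\,l-k}$.

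Finally, $\Theta^+_{n,0}=\Theta^-_{n,0}$, so the two families overlap. This causes no difficulty, since we only work with their linear span.

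The only point needing care is the index inequality $0\le k-l\le n+m-j$ in the mixed case, and it holds directly. So the span $\C[\cS]$ is closed under multiplication.
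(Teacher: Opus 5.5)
Your argument is correct and is essentially the paper's proof: the same-sign case is immediate, and the mixed case uses the substitution $z\ov z=(1+z\ov z)-1$ with a binomial expansion to produce terms $\Theta^{\pm}_{n+m-j,\,|k-l|}$. Your sign $(-1)^{l-j}$ is the right one (the paper writes $(-1)^{j}$, which is off by the overall factor $(-1)^{l}$). Your explicit check of the index constraint $0\le k-l\le n+m-j$ supplies a step the paper leaves implicit.
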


Clearly, $\Theta^+_{m,k}\, \Theta^+_{n,l}=\Theta^+_{m+n,k+l}$.
Let us examine products
$\Theta^+_{m,k}\cdot \Theta^-_{n,l}$.
 To be definite, assume
that $k\ge l$. Then
\begin{multline*}
\Theta^+_{k,m}\cdot \Theta^-_{l,n}=
\frac{(z\ov z)^l z^{k-l}}{(1+z \ov z)^{m+n}}=
\frac{\bigl[(1+z\ov z)-1\bigr]^l z^{k-l}}{(1+z \ov z)^{m+n}}
=\\=
 \frac{\sum_{j=0}^l C_l^j(-1)^{j}(1+z\ov z)^j\cdot z^{k-l}}
{(1+z \ov z)^{m+n}}=
\sum_{j=0}^l C_l^j(-1)^{j} \Theta^+_{m+n-j,k-l}.
\end{multline*}

\begin{observation}
\label{obs:2}
 The space $\C[\cS]$ is invariant with respect to  the operators
$\frac{\partial}{\partial z}$, $z\frac{\partial}{\partial z}$, 
$z^2\frac{\partial}{\partial z}$, $\frac{\partial}{\partial \ov z}$, $\ov z\frac{\partial}{\partial \ov z}$, 
$\ov z^2\frac{\partial}{\partial \ov z}$.
\end{observation}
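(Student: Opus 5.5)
The plan is to check the claim directly on the spanning set of $\C[\cS]$. All six operators are derivations, so by linearity it suffices to apply each one to a single $\Theta^\pm_{n,k}=z^k(1+z\ov z)^{-n}$ (respectively $\ov z^k(1+z\ov z)^{-n}$), with $0\le k\le n$. The situation is symmetric under complex conjugation $z\leftrightarrow\ov z$, which swaps $\Theta^+$ with $\Theta^-$ and the holomorphic operators with the antiholomorphic ones. So I only need to treat $\Theta^+_{n,k}$, for all six operators.

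The first step is to write down the two basic derivatives:
$$
\frac{\partial}{\partial z}\Theta^+_{n,k}=\frac{k z^{k-1}}{(1+z\ov z)^n}-\frac{n z^k\ov z}{(1+z\ov z)^{n+1}},\qquad
\frac{\partial}{\partial \ov z}\Theta^+_{n,k}=-\frac{n z^{k+1}}{(1+z\ov z)^{n+1}}.
$$
Multiplying by $1$, $z$, $z^2$ (respectively $1$, $\ov z$, $\ov z^2$) then gives the six terms. Each term has the form $z^a\ov z^b(1+z\ov z)^{-m}$ with $a,b\ge0$.

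The second step is to reduce each such monomial to the basis. If $a\ge b$, I write $(z\ov z)^b=\bigl[(1+z\ov z)-1\bigr]^b$, exactly as in the proof of Observation \ref{obs:1}. This gives a combination of $\Theta^+_{m-j,a-b}$ with $0\le j\le b$, and the case $a<b$ gives $\Theta^-$ terms in the same way. The one thing to check is the degree constraint $|a-b|\le m-j$ for every $j\le b$. The worst case is $j=b$, where the constraint becomes $\max(a,b)\le m$. This is automatic when $\Theta$ is bounded at infinity with the right decay, so I will check $\max(a,b)\le m$ term by term:
\begin{itemize}
\item $z\ov z^{\,}\cdot z^{k}/(1+z\ov z)^{n+1}$ has $a=k+1\le n+1=m$;
\item $z^{k-1}$ and $z^k$ over $(1+z\ov z)^n$ are fine since $k\le n$;
\item $\frac{\partial}{\partial\ov z}$ gives $z^{k+1}/(1+z\ov z)^{n+1}$, which is fine;
\item $z^2\partial_z$ gives $k z^{k+1}/(1+z\ov z)^n - n z^{k+2}\ov z/(1+z\ov z)^{n+1}$.
\end{itemize}

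The last case is the main obstacle, because when $k=n$ the individual terms exceed the bound. I plan to recombine them rather than treat them separately:
$$
\frac{k z^{k+1}(1+z\ov z)-n z^{k+2}\ov z}{(1+z\ov z)^{n+1}}
=\frac{k z^{k+1}+(k-n) z^{k+2}\ov z}{(1+z\ov z)^{n+1}}.
$$
For $k=n$ this reduces to $n\,\Theta^+_{n+1,n+1}$. For $k<n$ both terms satisfy $\max(a,b)\le k+2\le n+1$. The same recombination handles $\ov z^2\partial_{\ov z}$ acting on $\Theta^+$, which gives $-n z^{k+1}\ov z^2/(1+z\ov z)^{n+1}$ with $a=k+1$ and $b=2\le n+1$ when $n\ge1$. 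The case $n=0$ is trivial, since then $\Theta^+_{0,0}=1$ and every operator kills it.

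A cleaner alternative is also available. I could use the Remark identifying $\C[\cS]$ with the polynomials in $x_1,x_2,x_3$, and note that $E_\pm,E_0$ and $F_\pm,F_0$ are polynomial vector fields on $S^2$, so they preserve polynomials of bounded degree. However, the individual fields $z^2\partial_z$ and $\partial_{\ov z}$ are not real vector fields on $S^2$. So the direct computation above is the route I will commit to.
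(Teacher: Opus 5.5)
Your argument is correct and is essentially the paper's: both compute the six operators on $\Theta^+_{n,k}$ and reduce via $z\ov z=(1+z\ov z)-1$; the only real obstruction is $z^2\frac{\partial}{\partial z}$ at $k=n$, where the paper notes that the coefficient $m=0$ kills the bad term, which matches your recombination to $n\,\Theta^+_{n+1,n+1}$, and your criterion $\max(a,b)\le m$ simply packages the paper's explicit formulas and its separate $k=0$ cases (the ones producing $\Theta^-$ terms) into a single check. As an aside, your reason for discarding the alternative route is mistaken: each of the six operators is a linear combination of the $E$'s and $F$'s, e.g.\ $z^2\frac{\partial}{\partial z}=\frac12(E_++F_+)$ and $\frac{\partial}{\partial \ov z}=\frac12(E_+-F_+)$, so that argument would also work.
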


\sm

This follows from the following trivial calculations 
(but their result is not a priory obvious). Let $0\le m \le n$. We apply our operators
to $\Theta^+_{n,n-m}$:
$$
z^2 \frac{\partial}{\partial z} \frac{z^{n-m}} {(1+z \ov z)^n}
= \frac{n z^{n-m+1}}{(1+z\ov z)^{n+1}}-
\frac{m z^{n-m+1}}{(1+z\ov z)^{n}}\,\,
=\,\, n\Theta^+_{n+1,n-m+1}-m\Theta^+_{n,n-m+1}.
$$
If $m=0$, we can not write $\Theta^+_{n,n-m+1}$,
but in this case the term with $\Theta^+_{n,n-m+1}$
is absent.
 Next,
$$
z \frac{\partial}{\partial z} \frac{z^{n-m}} {(1+z \ov z)^n}
\!=\! \frac{n z^{n-m}}{(1+z\ov z)^{n+1}}-
\frac{m z^{n-m}}{(1+z\ov z)^{n}},
\,\, 
 \frac{\partial}{\partial z} \frac{z^{n-m}} {(1+z \ov z)^n}
\!=\! \frac{n z^{n-m-1}}{(1+z\ov z)^{n+1}}-
\frac{m z^{n-m-1}}{(1+z\ov z)^{n}}.
$$
The right hand side of the second equality does not have the desired form if $m=n$. But in that case $\frac{\partial}{\partial z} (1+z \ov z)^{-n}=-n \ov z (1+z \ov z)^{-n-1}$. Further, 
$$
\frac{\partial}{\partial \ov z}\,
\frac{z^{n-m}}
{(1+z \ov z)^{n}}=\frac{n\,z^{n-m+1}}
{(1+z \ov z)^{n+1}},\quad 
\ov z\frac{\partial}{\partial \ov z}\,
\frac{z^{n-m}}
{(1+z \ov z)^{n}}=\frac{n\,z^{n-m}}{(1+z \ov z)^{n+1}}-\frac{n\,z^{n-m}}{(1+z \ov z)^{n}};
$$
$$
\ov z^2\frac{\partial}{\partial \ov z}\,
\frac{z^{n-m}}
{(1+z \ov z)^{n}}=-\frac{n z^{n-m-1}}{(1+z \ov z)^{n+1}}+
\frac{2n z^{n-m-1}}{(1+z \ov z)^{n}}-
\frac{n z^{n-m-1}}{(1+z \ov z)^{n-1}}.
$$
The last transformation is not appropriate for us
if $m=n$. In this case,
$$
\ov z^2\frac{\partial}{\partial \ov z}\,
\frac{1}
{(1+z \ov z)^{n}}=-\frac{n z \ov z^2}{(1+z \ov z)^{n+1}}
=\frac{n \ov z}{(1+z \ov z)^{n+1}}- \frac{n \ov z}{(1+z \ov z)^{n}}.
$$ 

\begin{observation}
\label{obs:3}
 In the coordinates $x_1$, $x_2$, $x_3$ the space $\C[\cS]$ is the space of all polynomials. 
\end{observation}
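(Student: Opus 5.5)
We prove both inclusions between $\C[\cS]$ and the space $\C[x_1,x_2,x_3]$ of restrictions of polynomials to $S^2$, using formula \eqref{eq:x(z)} and Observation \ref{obs:1}.

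\emph{Polynomials lie in $\C[\cS]$.} By \eqref{eq:x(z)},
$$
x_1+ix_2=\frac{2z}{1+z\ov z}=2\Theta^+_{1,1},\qquad
x_1-ix_2=\frac{2\ov z}{1+z\ov z}=2\Theta^-_{1,1},\qquad
x_3=-\frac{1-z\ov z}{1+z\ov z}=1-\frac{2}{1+z\ov z}=\Theta^+_{0,0}-2\Theta^+_{1,0}.
$$
Thus $1$, $x_1\pm ix_2$ and $x_3$ lie in $\C[\cS]$. Since $\C[\cS]$ is closed under products by Observation \ref{obs:1}, every monomial in $x_1+ix_2$, $x_1-ix_2$, $x_3$ lies in $\C[\cS]$, and hence so does every polynomial.

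\emph{$\C[\cS]$ lies in the polynomials.} We need each $\Theta^\pm_{n,k}$ with $0\le k\le n$ to be a polynomial in the $x_j$. The key identity is
$$
\frac1{1+z\ov z}=\frac{1-x_3}{2},
$$
which follows from $x_3=1-2/(1+z\ov z)$ above. We can then factor
$$
\Theta^+_{n,k}=\Bigl(\frac{z}{1+z\ov z}\Bigr)^k\Bigl(\frac{1}{1+z\ov z}\Bigr)^{n-k}
=\Bigl(\frac{x_1+ix_2}{2}\Bigr)^k\Bigl(\frac{1-x_3}{2}\Bigr)^{n-k}.
$$
This is a polynomial, and it is here that the restriction $k\le n$ is used. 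The case of $\Theta^-_{n,k}$ is identical, with $x_1+ix_2$ replaced by $x_1-ix_2$.

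Combining the two inclusions gives the claim. The only delicate point is checking the sign conventions in \eqref{eq:x(z)}: with those conventions $x_3=-(1-z\ov z)/(1+z\ov z)$, so $1/(1+z\ov z)=(1-x_3)/2$ rather than $(1+x_3)/2$. A sign error here would not affect the conclusion, since either expression is a polynomial in $x_3$.
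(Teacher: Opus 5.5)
Your proof is correct and follows the paper's argument. In one direction, the coordinates lie in $\C[\cS]$ by \eqref{eq:x(z)} together with closure under products (Observation~\ref{obs:1}); in the other, $\Theta^\pm_{n,k}$ factors as $\bigl((x_1\pm ix_2)/2\bigr)^k\bigl((1-x_3)/2\bigr)^{n-k}$. Your write-up is slightly more careful than the paper's, since you cite Observation~\ref{obs:1} explicitly and keep the factor $2^{-k}$ that \eqref{eq:first-line} drops.
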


By \eqref{eq:x(z)}, the coordinates $x_1$, $x_2$, $x_3$
as functions on $(z,\ov z)$ 
are contained in $\C[\cS]$. On the other hand,
\begin{multline}
\frac{z^k}{(1+z\ov z)^n}=\Bigl(\frac{z}{1+z\ov z}\Bigr)^k\cdot
 \frac1{2^{n-k}}\cdot
\Bigl(1+\frac{1-z\ov z}{1+z\ov z}\Bigr)^{n-k}=\\=
(x_1+i x_2)^k\cdot \frac1{2^{n-k}}\cdot (1-x_3)^{n-k},
\label{eq:first-line}
\end{multline}
and this implies our observation.

We also continue \eqref{eq:first-line} as
\begin{equation}
\frac{z^k}{(1+z\ov z)^n}= \frac1{2^{n-k}}
\Bigl(\frac{z}{1+z\ov z}\Bigr)^k\cdot\sum_{j=0}^{n-k} C_{n-k}^j \Bigl(\frac{1-z\ov z}{1+z\ov z}\Bigr)^j.
\label{eq:Theta-in-space}
\end{equation}

Now we are ready to verify Corollary \ref{cor:}.

\sm

{\sc Proof of statement a).} Products of operators of the form
\eqref{eq:diff-diff} have the same form \eqref{eq:diff-diff}. Therefore it is sufficient to verify the statement for
all factors in \eqref{eq:NEBASIS}. 

For $\Theta_{n,k}^\pm$ we
take its decomposition \eqref{eq:Theta-in-space}
and refer to \eqref{eq:drob1}--\eqref{eq:drob3}.

For $\frac{\partial}{\partial z}$ we write
$$
\frac{\partial}{\partial z}=\frac12\Bigl[
\Bigl(\frac{\partial}{\partial z}+ 
\ov z^2\frac{\partial}{\partial \ov z}\Bigr)
+
\Bigl(\frac{\partial}{\partial z}-
\ov z^2\frac{\partial}{\partial \ov z}\Bigr)
\Bigr],
$$
and apply \eqref{eq:conf1} and \eqref{eq:drob1}, etc.
\hfill $\square$

\sm

{\sc Proof of  the statement b).}
Recall that 
\begin{equation}
\Bigl[\frac{\partial}{\partial z},z\frac{\partial}{\partial z}  \Bigr]=\frac{\partial}{\partial z},
\quad
\Bigl[\frac{\partial}{\partial z},z^2\frac{\partial}{\partial z}  \Bigr]=2z\frac{\partial}{\partial z},\quad
\Bigl[z\frac{\partial}{\partial z},z^2\frac{\partial}{\partial z}  \Bigr]=z^2\frac{\partial}{\partial z}.
\label{eq:last}
\end{equation}
We have the same relations for differentiations in $\ov z$;
differentiations in $z$ and $\ov z$ commute.

 Consider a product of two
expressions \eqref{eq:NEBASIS}
and try to transform it to the same order of factor
using transpositions of the form
$$
AB=BA+[A,B].
$$  
Generally, after such transposition we get  additional summands,
but $[A,B]$ always has the form $\sum_j c_j \Theta^{\delta_j}_{n_j,k_j}$, 
$\frac{\partial}{\partial z}$, \dots,
 $\ov z^2\frac{\partial}{\partial \ov z}$
 (this follows from Observation \ref{obs:2} and \eqref{eq:last}).
 The number of factors in an additional summand 
 is smaller than in initial summand.  
 Keeping in mind Observation \ref{obs:1},we observe the process leads to a sum whose summands have the form
 \eqref{eq:NEBASIS}).
\hfill $\square$

\tt

University of Graz,
\\
\phantom{.}
\hfill Department of Mathematics and Scientific computing;

Higher School of Modern Mathematics MIPT,


Moscow State University, MechMath. Dept;

 University of Vienna, Faculty of Mathematics.
 
 \sm

e-mail:yurii.neretin(dog)univie.ac.at

URL: https://www.mat.univie.ac.at/$\sim$neretin/ 

\phantom{URL:} https://imsc.uni-graz.at/neretin/index.html

\end{document}